\newcommand{\R}{\mathbb{R}} %
\newcommand{\w}{\mathbf{w}} %
\newcommand{\y}{\mathbf{y}} %
\newcommand{\z}{\mathbf{z}} %
\newcommand{\A}{\mathbf{A}} %
\newcommand{\B}{\mathbf{B}} %
\newcommand{\X}{\mathbf{X}}
\newcommand{\postcov}{\boldsymbol{\Gamma}_\text{post}}
\newcommand{\prcov}{\boldsymbol{\Gamma}_\text{prior}}
\newcommand{\D}{\mathbf{D}}
\newcommand{\bOmega}{\boldsymbol\Omega}
\newcommand{\Ev}{\mathbb{E}} %
\newcommand{\Var}{\mathrm{Var}} %
\newcommand{\mP}{\mathbb{P}} %
\newcommand{\calE}{\mathcal{E}} %
\newcommand{\calD}{\mathcal{D}} %
\newcommand{\tr}{\mathrm{tr}} %
\newcommand{\Lmax}{\lambda_{max}} %
\newcommand{\Lmin}{\lambda_{min}} %
\newcommand{\ds}{\displaystyle}
\newcommand{\BEA}{\begin{eqnarray*}}
\newcommand{\EEA}{\end{eqnarray*}}
\newcommand{\normP}[1]{{\left\vert\kern-0.25ex\left\vert\kern-0.25ex\left\vert #1    \right\vert\kern-0.25ex\right\vert\kern-0.25ex\right\vert}_p}
\newtheorem{definition}{Definition} %
\newtheorem{proposition}{Proposition} %
\newtheorem{theorem}{Theorem} %
\newtheorem{remark}{Remark} %
\title{Monte Carlo Estimators for the Schatten \texorpdfstring{$p$}{p}-norm of Symmetric Positive Semidefinite  Matrices}
\author{Ethan Dudley\footnotemark[2], 
        Arvind K. Saibaba\footnotemark[2], Alen Alexanderian\footnotemark[2]}
\begin{document}

\maketitle

\renewcommand{\thefootnote}{\fnsymbol{footnote}}

\footnotetext[2]{Mathematics Department, North Carolina State University, North Carolina, United States}

\begin{abstract}
   We present numerical methods for computing 
    the Schatten $p$-norm of positive semi-definite matrices. Our motivation stems from uncertainty quantification and optimal experimental design for inverse problems, where the Schatten $p$-norm defines a design criterion known as the P-optimal criterion. Computing the Schatten $p$-norm of high-dimensional matrices is computationally expensive. We propose a matrix-free method to estimate the Schatten $p$-norm using a Monte Carlo estimator and derive convergence results and error estimates for the estimator. To efficiently compute the Schatten $p$-norm for non-integer and large values of $p$, we use an estimator using a Chebyshev polynomial approximation and extend our convergence and error analysis to this setting as well. We demonstrate the performance of our proposed estimators on several test matrices and through an application to optimal experimental design of a model inverse problem.
\end{abstract}

\section{Introduction}\label{intro}
    The Schatten $p$-norm of a  matrix $\A \in \R^{m\times n}$ is defined as
    \[
        \normP{\A} = \left(\sum_{j = 1}^{\min\{m,n\}} \sigma_j^p\right)^{1/p}
    \]
    where $p \geq 1$ and $\sigma_j$ is the $j$th singular value of $\A$ for $1 \leq j \leq \min\{m,n\}$. 
    If $\A \in \R^{n \times n}$ is a symmetric positive semi-definite (SPSD) matrix, then the singular values of $\A$ are its eigenvalues, and the Schatten $p$-norm takes the form
    \begin{equation}\label{eq: Schatten p-norm def}
       \normP{\A} = \Big(\sum_{j=1}^n \lambda_j^p\Big)^{1/p} = \big(\tr(\A^p)\big)^{1/p}
    \end{equation}
    where the $\lambda_j$'s are the eigenvalues of $\A$. There are several notable
    special cases of the Schatten $p$-norm including the nuclear norm ($p = 1$), the Frobenius norm ($p = 2$) and the spectral norm ($p \to \infty$). Since it encapsulates many well-known norms as special cases, the Schatten $p$-norm is frequently used in linear algebra and analysis~\cite{bhatia2013matrix}. 
    
     Our motivation for computing the Schatten $p$-norm arises from uncertainty quantification
     and optimal experimental design (OED) for Bayesian inverse problems. An inverse problems seeks to estimate parameters of interest using experimental measurements. The goal of OED is to identify an optimal 
     set of experiments by optimizing certain design criteria that measure the uncertainty in the estimated parameters, subject to budgetary or physical constraints. 
     A well-known design criterion, known as the P-optimal design criterion, can be expressed in terms of the Schatten-$p$ norm. Since optimization algorithms for OED require repeated evaluations of the design criterion for large matrices, efficient algorithms for estimating the Schatten-p norm are desirable. 
    
    In this article, we focus on computing the Schatten-p norm for large SPSD matrices. For such matrices computing the Schatten $p$-norm is computationally challenging, because it requires computing either the matrix $p$th power or all of its eigenvalues. However, if the matrix is large and its entries are not available explicitly, then the Schatten $p$-norm cannot be easily computed from its definition \eqref{eq: Schatten p-norm def}, and special numerical methods are necessary. Therefore, we consider computing the Schatten $p$-norm using matrix-free Monte Carlo methods. In a matrix-free method for computing $\normP{\A}$ we only require matrix-vector products involving $\A$. 
    \textbf{Related work}.
    Hutchinson~\cite{hutchinson1990stochastic} developed a matrix-free Monte Carlo estimator using samples from the Rademacher distribution for computing $\tr(\A)$, i.e., the Schatten-1 norm. %
    Avron and Toledo~\cite{avron2011randomized} extended this idea to random variables from other distributions such as Gaussian and uniformly selected vectors from an orthogonal matrix. They devised several metrics for comparing the various trace estimators including a single sample variance metric and a Chernoff-style lower bound on the minimum number of samples required to meet a given error tolerance with a given confidence level. 
    This is made precise in the following definition:   
    \begin{definition}\label{def: var-delta est}
    Given $\varepsilon > 0$ and $\delta \in (0,1)$ and an appropriate distribution of random samples $\w_j \in \R^n$, we say 
    \[
        Z_M = \frac{1}{M}\sum_{j=1}^M\w_j^T\A\w_j
    \]
    is an $(\varepsilon, \delta)$ estimator for $\tr(\A)$ if 
    \begin{equation}\label{eq: eps-delta Estimator}
        \mP\left(\big|Z_M - \tr(\A)\big|\leq \varepsilon|\tr(\A)|\right)\geq 1 - \delta.
    \end{equation}
    \end{definition}
    This definition alternatively says that $Z_M$ is an $(\varepsilon,\delta)$ estimator if with probability at least $1-\delta$, it has a relative error at most $\varepsilon$. Avron and Toledo~\cite{avron2011randomized}  provided a lower bound on the number of samples so that $Z_M$ is $(\varepsilon, \delta)$ estimator for $\tr(\A)$ when $\w_j$ are drawn from the Gaussian, Rademacher and Uniform distributions. Roosta-Khorasani and Ascher~\cite{roosta2015improved} further reduced the lower bound on the number of samples needed for an $(\varepsilon, \delta)$ estimator for $\tr(\A)$ when the estimators use random vectors from the Rademacher and Gaussian distributions. This Monte Carlo estimator has been extended to Schatten-p norm using Chebyshev polynomials~\cite{han2017approximating} and Lanczos approach~\cite{ubaru2017fast}.
    
    A recent survey paper by Martinsson and Tropp~\cite{martinsson2020randomized} reviews estimators for the Schatten-p norms, which avoid working with $\A^p$ directly.  Let $\X = \bOmega^T\A\bOmega$, where the entries of $\bOmega \in \mathbb{R}^{n\times M}$ have zero mean and unit variance. The estimator $V_p$ in Kong and Valiant~\cite{kong2017spectrum}, is 
    \[ V_p = \begin{pmatrix} M \\ p\end{pmatrix}^{-1} \tr(\mathcal{T}(\X)^{p-1}\X),\]
    where $\mathcal{T}(\X)$ is a matrix that contains the strictly upper triangle part of $\X$, and zeroes out the rest of the entries. Note that $V_p$ is an unbiased estimator for $\normP{\A}$. A related estimator is 
    \[ W_p = \frac{(M - p)!}{M!} \sum_{1\leq i_1,\dots,i_p\leq M} \X_{i_1,i_2}\X_{i_2,i_3}\dots\X_{i_p,i_1},\]
    where the summation is only over distinct indices. Similar to $V_p$, $W_p$ is an unbiased estimator for $\normP{\A}$. For both estimators, the recommended number of samples $M \gtrsim n^{1-2/p}.$ This lower bound was established by~\cite{li2014sketching}. Both of these estimators are expensive for large $p$; however, the algorithm only requires $M$ matrix-vector products involving $\A$. Theoretical analysis suggests that the variance of these estimators are large which makes their use for large-scale applications impractical~\cite{martinsson2020randomized}.

    \textbf{Our approach and contributions}.
    We focus on analysis and efficient computational methods for the following estimator of $\normP{\A}$  
    \[ 
    \normP{\A} \approx \left(\frac{1}{M}\sum_{j=1}^M\w_j^T\A^p\w_j , \right)^{1/p},
    \]
    where $\w_j$ are random vectors from an appropriate distribution. 
    To our knowledge, an analysis of the convergence of this (biased) estimator has not been performed in the literature. Computing the Monte Carlo estimator involves repeated applications of $\A^p$ to a vector, which is computationally expensive for large or non-integer values of $p$. To reduce this cost, two different approaches were proposed based on Chebyshev polynomial approximation~\cite{han2017approximating} and based on Lanczos approach~\cite{ubaru2017fast}. In this article, building on the work~\cite{han2017approximating}, we consider approximate Monte Carlo estimators based on Chebyshev polynomials.

    The following are the main contributions of this article.
    \begin{enumerate}
        \item In our analysis of the new estimator we derive bounds on the expectation, bias and variance (Section~\ref{sec: moments}) and we show the estimator converges almost surely as well as in $L^1$ and $L^2$ (Sections~\ref{sec: BuildEst} and~\ref{sec: converge}). In Section~\ref{sec: eps-delta}, we show that the number of samples required to form an $(\varepsilon, \delta)$ estimator for $\normP{\A}$ does not grow as $p \to \infty$.
        \item In section~\ref{sec: Cheby}, we consider a variation of the Chebyshev-Monte Carlo method proposed by~\cite{han2017approximating}. This approach is applicable to non-integer values as well as large values of $p$. We extend our results from the standard Monte Carlo approach to the Chebyshev-Monte Carlo approach.
        \item We provide extensive numerical tests on synthetic matrices, matrices arising from real-world problems and a model problem from OED which help illustrate the theoretical results. We also provide numerical evidence that a small degree Chebyshev approximation $\psi_N(\A)$ to $\A^{p/2}$ is sufficient for an accurate estimator. 
    \end{enumerate}

\section{Background}\label{sec: background}
In this section, we review known results for the two largest contributing ideas in this article: Monte Carlo Trace Estimators (Section~\ref{ssec:mctrace}) and Chebyshev Polynomials (Section~\ref{ssec:chebyshev}).

\subsection{Monte Carlo Trace Estimators}\label{ssec:mctrace}
    Let $(\Omega, \mathcal{F}, \mP)$ be a probability space. 
    \begin{definition}\label{def: trace est}
    Let $\w: \Omega \to \R^n$ be a random $n$-vector with mean 0 and identity covariance matrix, and $\B$ be a symmetric matrix. Then the Monte Carlo trace estimator of $\B$ is given by
    \begin{equation}\label{eq: ZM}
        Z_M = \frac{1}{M}\sum_{j = 1}^M\w_j^T\B\w_j,
    \end{equation}
    where $\w_j$, $j=1, \ldots, M$ are distributed according to the law of $\w$.
    \end{definition}
    
    We call $Z_M$ a trace estimator of $\B$ because $\Ev(\w^T\B\w) = \tr(\B)$ and therefore by the linearity of expectation $\Ev(Z_M) = \tr(\B)$~\cite{hutchinson1990stochastic,avron2011randomized}. Furthermore, since $\w_j^T\B\w_j \in L^1(\Omega,\mathcal{F}, \mP)$, by the strong law of large numbers~\cite{jacod2012probability}, we have
    \[
        \mP\left( \lim_{M \to \infty} Z_M = \tr(\B)\right)= 1.
    \]
    That is, $Z_M$ converges to the $\tr(\B)$ almost surely (a.s.). Lastly, we can formulate a Chernoff-style lower bound on $M$ to guarantee that $Z_M$ is an $(\varepsilon, \delta)$ estimator; that is, it is the least number of samples to guarantee $Z_M$ is an $(\varepsilon, \delta)$ estimator for $\tr(\B)$ (i.e., $Z_M$ satisfies Definition~\eqref{def: var-delta est}). Note that the $(\varepsilon, \delta)$ bound on $M$ is dependent on the distribution from which the $\w_j$ are chosen, as the different distributions affect the variance of $Z_M$. This is summarized in Table~\ref{tab: ZM properties}. 
    \begin{table}[!ht]
        \centering
        {\renewcommand{\arraystretch}{2}%
        \begin{tabular}{|c|c|c|}
            \hline
            & $\Var(Z_M)$ & $(\varepsilon, \delta)$ bound \\
            \hline
            \text{Gaussian} & $\frac{2\|\B\|_F^2}{M}$ & $M \geq 8\varepsilon^{-2}\ln\left(\frac{2}{\delta}\right)$\\
            \hline 
            \text{Rademacher} & $\frac{2(\|\B\|_F^2 - \sum_{i = 1}^n\B_{ii}^2)}{M}$ & $M \geq 6\varepsilon^{-2}\ln\left(\frac{2}{\delta}\right)$\\
            \hline 
        \end{tabular}}
        \caption{The Variance and the number of samples required for $(\varepsilon, \delta)$ bound for $\tr(\B)$. Here, $\w_j$'s are chosen from the Gaussian and Rademacher distributions~\cite{avron2011randomized, roosta2015improved}}
        \label{tab: ZM properties}
    \end{table}

    \subsection{Chebyshev polynomials}\label{ssec:chebyshev}
    Throughout this article, we will use Chebyshev polynomials of the first kind, which are defined as 
    \[
        T_j(x) = \cos(j\arccos(x)) \qquad x \in [-1,1] \quad j = 0, 1, 2, \dots 
    \]
    As is well-known, these polynomials are orthogonal with respect to the inner product 
    $\langle u, v\rangle_w = 
    \int_{-1}^1 u(x) v(x) \, w(x)dx$, with the weight function $w(x) = 1/\sqrt{1-x^2}$, and 
    \[
        \langle T_i, T_j\rangle_w = 
        \begin{cases}
        \pi & i = j = 0,\\
        \frac{\pi}{2} & i = j \neq 0,\\
        0 & i \neq j.
        \end{cases}
    \]
    Moreover, any continuous function $g$ on the interval $[-1,1]$ can be expressed as~\cite{boyd2001chebyshev}
    \[
        g(x) = c_0 T_0(x) + \sum_{j = 1}^\infty c_jT_j(x),
    \]
    where 
    the series converges uniformly and the coefficients can be computed as 
    \begin{equation}\label{eq: Cheby Coeff}
        c_j = \ds\frac{2}{\pi}\int_{-1}^1 \frac{g(x)T_j(x)}{\sqrt{1 - x^2}}\; dx = \ds\frac{2}{\pi}\int_{0}^\pi g(\cos(\theta))\cos(j\theta)\; d\theta
    \end{equation}
    and $c_0$ carries an additional factor of a half. Note that the Chebyshev polynomial approximation to a function is equivalent to the Fourier cosine series approximation of $g$,~\cite{boyd2001chebyshev} therefore the coefficients $c_j$ can be computed using the real part of the Fast Fourier Transform (FFT) of $g$; see~\cite{trefethen2008gauss}
    for details, as well as computer code for doing so.
    
    Let $\psi_N(x)$ be the $N$th degree Chebyshev approximation to $g$. The error in $\psi_N(x)$ is bounded tightly by~\cite{boyd2001chebyshev}
    \[
        \left|g(x) - \psi_N(x)\right| \leq \sum_{j = N+1}^\infty |c_j|.
    \]
    Trefethen~\cite{trefethen2008gauss} presented a method for approximating this error without computing the remaining coefficients for both analytic functions and functions with singularities in the complex plane. Here we present the analytic version:
    \begin{equation}\label{eq: Trefethen cheby err bound}
        \left|g(x) - \psi_N(x)\right| \leq \frac{4U}{\rho^N(1-\rho)},
    \end{equation}
    where $g$ is analytic on the inside of an ellipse $E$ in the complex plane with foci at $\pm 1$, $U = \ds\sup_{z \in E}g(z)$ and $\rho$ is the sum of the major and minor semi-axes of $E$ with $\rho > 1$. 

    Finally, we recall that Chebyshev polynomials have a three term recurrence relation~\cite{boyd2001chebyshev}:
    \[
        T_{j+1}(x) = 2xT_{j}(x) - T_{j-1}(x), \quad x \in [-1,1],
    \]
    with $T_0(x) = 1$ and $T_1(x) = x$. 
    This ensures that matrix-vector products using the Chebyshev matrix polynomials can be computed in a matrix-free manner, which is useful in constructing a Monte Carlo approximation to $\normP{A}$.

\section{Monte Carlo Estimators and their Analyses}\label{sec: mc}
    In this section, we construct a Monte Carlo estimator for the Schatten $p$-norm (Section \ref{sec: BuildEst}) and present a detailed
    analysis of convergence of the estimator (Section \ref{sec: converge}). 
    \subsection{Building a Schatten \texorpdfstring{$p$}{p}-norm Estimator}\label{sec: BuildEst}
        Recall if $\A$ is SPSD and $\w:\Omega \to \R^n$ is an $n$-vector with mean $0$ and identity covariance matrix, then
        \[
            \normP{\A}^p = \tr(\A^p) = \Ev(\w^T\A^p\w).
        \]
        Therefore, consider the following Monte Carlo estimator for $\normP{\A}$.
        \begin{definition}\label{def:X_M}
            Let $\A$ be an SPSD matrix. We define the Monte Carlo estimator for $\normP{\A}$ as  
            \begin{equation}\label{eq: X_M}
            X_M = \left(\frac{1}{M}\sum_{j = 1}^M\w_j^T\A^p\w_j\right)^{1/p}, \quad M \geq 1,
        \end{equation}
        where  $\w_j$'s are realizations of a random variable $\w : \Omega \to \R^n$ with $\Ev(\w) = 0$ and $\Ev(\w \w^T) = \mathbf{I}$. 
        \end{definition}

        Note that $X_M^p$ is an unbiased estimator for $\normP{\A}^p$. Furthermore, if $p = 1$, then $X_M$ is just the Monte Carlo trace estimator~\eqref{eq: ZM}.
        \begin{algorithm}[!ht]
            \caption{Constructing the Monte Carlo Estimator $X_M$}\label{algorithm:: mc}
            \begin{algorithmic}
	            \STATE \textbf{Input:} a SPSD matrix $\A \in \R^{n\times n}$, positive integers $M$ (number of samples) and $p$ (Schatten-p degree)
	            \STATE \textbf{Initialize:} $X_M \gets 0$
	            \STATE $K \gets \ds\Big\lfloor\frac{p}{2}\Big\rfloor$
	            \FOR{$j = 1$ \text{ to } $M$} 
	                \STATE $\w_j \gets$ random vector with mean \textbf{0} and covariance $\mathbf{I}$
	                \STATE $\y \gets \A^K\w_j$
	                \STATE \textbf{If }{$p$ is odd} 
	                    \STATE $X_M \gets X_M + \y^T\A\y/M$
	                \STATE \textbf{Else}
	                    \STATE $X_M \gets X_M + \y^T\y/M$
	                \STATE \textbf{EndIf}
	            \ENDFOR
	            \STATE $X_M \gets (X_M)^{1/p}$
            \end{algorithmic}
        \end{algorithm}
        
        In Algorithm \ref{algorithm:: mc}, we provide a pseudo-code for efficiently computing $X_M$ for positive integer values of $p$. First, note that by using the symmetry of $\A$, computing $X_M$ using Algorithm~\ref{algorithm:: mc} requires $\left\lceil\frac{p}{2}\right\rceil M$ matrix-vector products with $\A$. Second, the algorithm is general in the sense that any distribution for the random vectors $\w_j$ can be used so long as $\w_j$'s are independent and drawn from a distribution that has mean zero and the identity matrix as its covariance. However, in our analysis, we assume that the entries of
        $\w_j$'s are independent standard normal random variables. If a different distribution is used, then the number of samples required for an $(\varepsilon,\delta)$ estimator for $\normP{\A}$ will have to be changed appropriately. 

         We first collect a series of results for the estimator $X_M^p$ in Proposition~\ref{prop:mc}.  Then, in the rest of this section we appropriately adapt these results to the estimator $X_M$. 
         
        \begin{proposition}\label{prop:mc}
           The estimator $X_M$ satisfies the following properties:
            \begin{enumerate}
                \item (Expectation): $\Ev(X_M^p) = \normP{\A}^p$.\label{prop:Ev X_M^p}
                \item (Variance): $\Var(X_M^p) = \displaystyle\frac{2\|\A^p\|_F^2}{M}$.\label{prop:Var X_M^p}
                
                \item (Almost Sure Convergence): $\displaystyle\lim_{M \to \infty}X_M^p = \normP{\A}^p$ a.s.\label{prop:X_M^p conv a.s.}
                \item ($(\varepsilon, \delta)$ Estimator): If $M \geq 8\varepsilon^{-2}\ln\left(\displaystyle\frac{2}{\delta}\right)$ then $X_M^p$ is an $(\varepsilon, \delta)$ estimator for $\normP{\A}^p$.\label{prop:eps-delta bound X_M^p}
                \item (Non-negative): $X_M^p \geq 0$ for all $M$.\label{prop:X_M^p nonneg}
            \end{enumerate}
        \end{proposition}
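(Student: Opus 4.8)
The plan is to recognize that $X_M^p = \frac{1}{M}\sum_{j=1}^M \w_j^T\A^p\w_j$ is \emph{exactly} the Monte Carlo trace estimator $Z_M$ of Definition~\ref{def: trace est} applied to the symmetric matrix $\B = \A^p$, and then to read off each of the five properties from the facts already recorded in Section~\ref{ssec:mctrace}. Since $\A$ is SPSD, $\A^p$ is well defined (by repeated multiplication for integer $p$, or by functional calculus otherwise) and is itself SPSD, hence symmetric, so the trace-estimator machinery applies directly.

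For part~\ref{prop:Ev X_M^p}, I would use that $\Ev(\w^T\B\w) = \Ev(\tr(\B\w\w^T)) = \tr(\B\,\Ev(\w\w^T)) = \tr(\B)$ for any random vector $\w$ with mean $0$ and identity covariance; taking $\B=\A^p$ and using linearity of expectation over the $M$ i.i.d.\ samples gives $\Ev(X_M^p) = \tr(\A^p) = \normP{\A}^p$. Part~\ref{prop:X_M^p nonneg} is immediate: since $\A^p$ is SPSD, each summand $\w_j^T\A^p\w_j$ is nonnegative, and hence so is their average $X_M^p$; this requires no distributional assumption beyond the definition of $X_M$.

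Parts~\ref{prop:Var X_M^p} and~\ref{prop:eps-delta bound X_M^p} use the standing assumption that the entries of the $\w_j$ are i.i.d.\ standard normal. For the variance, the Gaussian row of Table~\ref{tab: ZM properties} gives $\Var(\w_j^T\A^p\w_j) = 2\|\A^p\|_F^2$; independence of the $\w_j$ then yields $\Var(X_M^p) = M^{-2}\sum_{j=1}^M \Var(\w_j^T\A^p\w_j) = 2\|\A^p\|_F^2/M$. For the $(\varepsilon,\delta)$ claim, note that $\tr(\A^p) = \normP{\A}^p \geq 0$, so $|\tr(\A^p)| = \normP{\A}^p$, and the Gaussian $(\varepsilon,\delta)$ bound $M \geq 8\varepsilon^{-2}\ln(2/\delta)$ from Table~\ref{tab: ZM properties} transfers verbatim with $\B = \A^p$, matching Definition~\ref{def: var-delta est}.

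Finally, part~\ref{prop:X_M^p conv a.s.} follows from the strong law of large numbers: the variables $\w_j^T\A^p\w_j$ are i.i.d., lie in $L^1(\Omega,\mathcal{F},\mP)$ (a Gaussian quadratic form has finite moments of all orders), and have common mean $\tr(\A^p)$, so $X_M^p \to \tr(\A^p) = \normP{\A}^p$ almost surely. I do not anticipate a genuine obstacle; the only points requiring care are (i) verifying that $\A^p$ is SPSD and symmetric so the background results apply, and (ii) keeping track of which properties hold for a general mean-zero, identity-covariance distribution (parts~\ref{prop:Ev X_M^p},~\ref{prop:X_M^p conv a.s.},~\ref{prop:X_M^p nonneg}, the latter two modulo integrability) versus which rely specifically on Gaussian samples (parts~\ref{prop:Var X_M^p} and~\ref{prop:eps-delta bound X_M^p}).
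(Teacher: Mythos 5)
Your proposal is correct and follows essentially the same route as the paper: both treat $X_M^p$ as the standard trace estimator applied to $\B=\A^p$ and read the five properties off the known results (expectation and Gaussian variance from Avron--Toledo, the $(\varepsilon,\delta)$ bound from Roosta-Khorasani--Ascher, the strong law for almost sure convergence, and SPSD-ness of $\A^p$ for non-negativity). Your added care about which parts need the Gaussian assumption versus only mean zero and identity covariance is consistent with the paper's standing assumptions.
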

        
        \begin{proof} %
            The proof collects well-known results from the literature. 
            The expressions for the expectation and variance of $X_M^p$ follow 
            from~\cite[Lemma 5]{avron2011randomized}.
            To see the third statement, note that since $X_M^p \in L^2(\Omega, \mathcal{F}, \mP)$, by the strong law of large  numbers~\cite{jacod2012probability} we have
            \[
                \displaystyle\lim_{M \to \infty}X_M^p = \Ev(X_M^p) = \normP{\A}^p \qquad \text{a.s.}
            \]
            Regarding the fourth statement, Roosta-Khorasani and Ascher~\cite{roosta2015improved} showed that if $\varepsilon > 0$, $\delta \in (0,1)$ and 
            $M \geq 8\varepsilon^{-2}\ln({2}/{\delta})$,
            then $X_M^p$ is a $(\varepsilon, \delta)$ estimator of $\normP{\A}^p$. %
            Finally, since $\A$ is SPSD, then so is $\A^p$. Thus, $\w_j^T\A^p\w_j \geq 0$ for all $j$. Hence $X_M^p \geq 0$ for all $M$. \hfill
        \end{proof}
        
    Since the quantity of interest is $\normP{\A}$, we have to analyze its estimator $X_M$. While Proposition~\ref{prop:mc} states several properties for $X_M^p$, a natural question is the extent to which these properties apply to $X_M$. We investigate this in the rest of this section. 
        
    \subsection{Expectation and Variance of \texorpdfstring{$X_M$}{}}\label{sec: moments}
        In this section we will provide a bound on the first two moments of $X_M$. Specifically we show that $X_M$ is biased for all finite values of $M$ and we provide an upper bound on the variance of $X_M$.
        
        \begin{proposition}[Expectation]\label{proposition: X_M EV}
           For all $M \geq 1$, $\Ev\left(X_M\right) \leq \normP{\A}$. 
        \end{proposition}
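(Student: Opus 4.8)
The plan is to exploit the fact that $X_M = (X_M^p)^{1/p}$ is obtained from the unbiased estimator $X_M^p$ of $\normP{\A}^p$ by applying the function $\phi(t) = t^{1/p}$, and that $\phi$ is concave on $[0,\infty)$ whenever $p \geq 1$. Since Proposition~\ref{prop:mc}(\ref{prop:X_M^p nonneg}) guarantees $X_M^p \geq 0$ almost surely, the random variable $X_M^p$ takes values in the domain where $\phi$ is concave, so Jensen's inequality applies directly.

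Concretely, I would first recall from Proposition~\ref{prop:mc}(\ref{prop:Ev X_M^p}) that $\Ev(X_M^p) = \normP{\A}^p$, and note that $X_M^p \in L^1(\Omega,\mathcal{F},\mP)$ (indeed it is in $L^2$, as used in the proof of Proposition~\ref{prop:mc}), so all the expectations below are finite and Jensen's inequality is applicable. Then I would write
\[
    \Ev(X_M) = \Ev\big((X_M^p)^{1/p}\big) \leq \big(\Ev(X_M^p)\big)^{1/p} = \big(\normP{\A}^p\big)^{1/p} = \normP{\A},
\]
where the inequality is Jensen's inequality applied to the concave map $t \mapsto t^{1/p}$ on $[0,\infty)$. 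A one-line justification of concavity (the second derivative $\tfrac{1}{p}(\tfrac{1}{p}-1)t^{1/p - 2}$ is nonpositive for $p \geq 1$ and $t > 0$) can be included for completeness.

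The argument has essentially no obstacle; the only point requiring a moment's care is making sure the hypotheses of Jensen's inequality are met — namely nonnegativity of the argument (from Proposition~\ref{prop:mc}(\ref{prop:X_M^p nonneg})) and integrability — so I would state these explicitly before invoking it. It is also worth remarking that the inequality is strict for $M$ finite unless $X_M^p$ is almost surely constant (which, by the variance formula in Proposition~\ref{prop:mc}(\ref{prop:Var X_M^p}), happens only in degenerate cases such as $\A = 0$), which motivates the bias analysis in the remainder of the section.
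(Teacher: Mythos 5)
Your proof is correct and follows the same route as the paper: apply Jensen's inequality to the concave map $t \mapsto t^{1/p}$ acting on the nonnegative, unbiased estimator $X_M^p$ with $\Ev(X_M^p) = \normP{\A}^p$. The extra checks on integrability and the remark on strictness are fine additions but do not change the argument.
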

        \begin{proof}
            Let $M$ be any natural number. Since $X_M = (X_M^p)^{1/p}$ and $f(x) = x^{1/p}$ is concave, by Jensen's inequality~\cite{jacod2012probability}, we have
            $\Ev(X_M) = \Ev\left((X_M^p)^{1/p}\right) \leq \left(\Ev(X_M^p)\right)^{1/p} = \normP{\A}$.
        \end{proof}
        
        Note that, in the above result, equality is attained for $p=1$. Similar to the first moment we will derive an upper bound on the variance of $X_M$. 
        
        \begin{proposition}[Variance]\label{prop: X_M Var}
        If $\A$ is nonzero, then the variance in $X_M$ is finite and satisfies
        \[
            \Var\left(X_M\right) \leq \frac{2\|\A^p\|_F^2}{M\normP{\A}^{2p-2}}.
        \]
        \end{proposition}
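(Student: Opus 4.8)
The plan is to transfer the exact variance of $X_M^p$, recorded in Proposition~\ref{prop:mc}, to $X_M$ by controlling the map $t\mapsto t^{1/p}$ near the mean. Set $Y := X_M^p$ and $\mu := \Ev(Y) = \normP{\A}^p$; since $\A$ is nonzero, $\mu > 0$. Let $f(t) := t^{1/p}$ on $[0,\infty)$, so that $X_M = f(Y)$, $f$ is concave and increasing, and $f(0) = 0$. Because the variance of a random variable is the minimal mean-square deviation from a constant, $\Var(X_M) = \Var(f(Y)) \le \Ev\bigl[(f(Y) - f(\mu))^2\bigr]$, so it suffices to bound the right-hand side.

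The crux is the pointwise estimate
\[
   \bigl(f(y) - f(\mu)\bigr)^2 \;\le\; \frac{f(\mu)^2}{\mu^2}\,(y - \mu)^2 \qquad\text{for every } y \ge 0.
\]
I would prove this with a secant-line argument anchored at the origin. For $y \le \mu$, writing $y$ as the convex combination $\tfrac{y}{\mu}\,\mu + (1-\tfrac{y}{\mu})\cdot 0$ and using concavity together with $f(0)=0$ gives $f(y) \ge \tfrac{y}{\mu} f(\mu)$, hence $0 \le f(\mu) - f(y) \le \tfrac{f(\mu)}{\mu}(\mu - y)$; for $y \ge \mu$, writing $\mu = \tfrac{\mu}{y}\,y + (1-\tfrac{\mu}{y})\cdot 0$ gives $f(y) \le \tfrac{y}{\mu} f(\mu)$, hence $0 \le f(y) - f(\mu) \le \tfrac{f(\mu)}{\mu}(y - \mu)$. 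In both cases $|f(y) - f(\mu)| \le \tfrac{f(\mu)}{\mu}\,|y - \mu|$, and squaring yields the claim. (Equivalently, dividing through by $\mu^{1/p}$, this is just $|t^{1/p} - 1| \le |t - 1|$ for $t \ge 0$, which holds since $t^{1/p} \le t$ for $t \ge 1$ and $t^{1/p} \ge t$ for $0 \le t \le 1$ when $p \ge 1$.)

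Taking expectations in the pointwise bound and invoking $\Ev[(Y - \mu)^2] = \Var(X_M^p) = 2\|\A^p\|_F^2/M$ from Proposition~\ref{prop:mc}, together with $f(\mu) = \mu^{1/p} = \normP{\A}$ and $\mu = \normP{\A}^p$, we obtain
\[
   \Var(X_M) \;\le\; \Ev\bigl[(f(Y) - f(\mu))^2\bigr] \;\le\; \frac{\normP{\A}^2}{\normP{\A}^{2p}}\cdot\frac{2\|\A^p\|_F^2}{M} \;=\; \frac{2\|\A^p\|_F^2}{M\,\normP{\A}^{2p-2}},
\]
which is finite precisely because $\normP{\A} > 0$; since this forces $f(Y) \in L^2$, the variance is well defined and the displayed chain is legitimate. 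I expect the only real obstacle to be the pointwise inequality: the naive tangent-line bound at $\mu$ fails on the side $y < \mu$ because $f'(t) = p^{-1}t^{1/p-1}$ blows up as $t \to 0$, so one genuinely needs the through-the-origin secant, which is exactly where concavity and $f(0)=0$ enter.
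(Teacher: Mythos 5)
Your proof is correct and follows the same overall strategy as the paper: both transfer the known variance of $X_M^p$ to $X_M$ through the concave map $t\mapsto t^{1/p}$, bounding $\Var(X_M)$ by $\Ev\bigl[(Y^{1/p}-\mu^{1/p})^2\bigr]$ with $\mu=\Ev(Y)$ and then by $\Var(Y)/\mu^{2-2/p}$. The only difference is that the paper imports this last inequality as a black box from Nollau (cited as Theorem 1 and Corollary 1 of that reference, valid for any $\alpha\in[0,1]$), whereas you re-derive it for $\alpha=1/p$ via the through-the-origin secant bound $|f(y)-f(\mu)|\le \frac{f(\mu)}{\mu}|y-\mu|$ for concave increasing $f$ with $f(0)=0$. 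Your pointwise inequality and both concavity cases check out, and your closing remark correctly identifies why the naive tangent-line (Lipschitz) argument fails near $0$; the payoff of your version is a short, self-contained proof that makes the mechanism behind the cited inequality transparent, at the cost of being specific to the power function rather than quoting the general result.
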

        \begin{proof}
            Without loss of generality, assume  $p > 1$, otherwise the variance bound holds trivially. By ~\cite[Theorem 1, Corollary 1]{nollau1995inequalities}, if $Y$ is a non-negative random variable with positive mean and finite variance, then for $\alpha \in [0,1]$
            \begin{equation}\label{eqn:vary} \Var(Y^{\alpha}) \leq \Ev |Y^\alpha - (\Ev Y)^\alpha|^2  \leq \frac{\Var(Y)}{(\Ev Y)^{2-2\alpha}}.\end{equation}
            We let $Y = X_M^p$ and $\alpha = 1/p$. Note that $Y$ is non-negative, $\Ev (Y)  = \normP{\A}^p>  0$ since $\A$ is nonzero, and from Table~\ref{tab: ZM properties}, $\Var(Y) = 2 \|\A^p\|_F^2/M < \infty$. Therefore,~\eqref{eqn:vary} applies, and 
            \[ \Var(X_M) = \Var\left((X_M^p)^{1/p}\right) \leq \frac{2\|\A^p\|_F^2}{M\normP{\A}^{p(2-2/p)} } 
            = \frac{2\|\A^p\|_F^2}{M\normP{\A}^{2p-2} }. \qedhere %
            \]
        \end{proof}
        
        \subsection{Convergence of estimators}\label{sec: converge}
        In this section, we show that $X_M$ converges almost surely, in $L^1$, and in $L^2$ as $M \to \infty$. %

       \begin{proposition}[Almost sure convergence]\label{prop: X_M a.s.}
       We have 
             $\ds\lim_{M \to \infty}X_M = \normP{\A}$ almost surely.
        \end{proposition}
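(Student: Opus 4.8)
The plan is to deduce the almost sure convergence of $X_M$ directly from that of $X_M^p$, which is already available as part~\ref{prop:X_M^p conv a.s.} of Proposition~\ref{prop:mc}. Recall that by definition $X_M = (X_M^p)^{1/p}$, and by part~\ref{prop:X_M^p nonneg} of Proposition~\ref{prop:mc} we have $X_M^p \geq 0$ for every $M$, so the map $t \mapsto t^{1/p}$ is being applied only on $[0,\infty)$, where it is continuous. Hence the first step is simply to record that $f(t) = t^{1/p}$ is a continuous function on $[0,\infty)$.

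The second step is to invoke the continuous mapping theorem (or, equivalently, just the elementary fact that continuous functions preserve pointwise limits). Let $N \in \mathcal{F}$ be the $\mP$-null set off of which $X_M^p(\omega) \to \normP{\A}^p$; such a set exists by Proposition~\ref{prop:mc}, part~\ref{prop:X_M^p conv a.s.}. For every $\omega \notin N$, continuity of $f$ on $[0,\infty)$ gives
\[
    \lim_{M \to \infty} X_M(\omega) = \lim_{M \to \infty} \big(X_M^p(\omega)\big)^{1/p} = \big(\normP{\A}^p\big)^{1/p} = \normP{\A}.
\]
Since $\mP(N) = 0$, this establishes $\lim_{M\to\infty} X_M = \normP{\A}$ almost surely.

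I do not anticipate any real obstacle here; the only points that need a word of care are that the argument of the root stays nonnegative (handled by part~\ref{prop:X_M^p nonneg} of Proposition~\ref{prop:mc}) and the degenerate case $\A = 0$, in which $\normP{\A} = 0$ and $X_M \equiv 0$, so the claim is trivial. If one prefers to avoid citing the continuous mapping theorem by name, the same conclusion follows from the $\varepsilon$--$\delta$ definition of continuity of $t \mapsto t^{1/p}$ at the point $\normP{\A}^p$, applied pathwise on the complement of $N$.
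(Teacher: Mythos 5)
Your proposal is correct and follows essentially the same route as the paper's own proof: both use the non-negativity and almost sure convergence of $X_M^p$ from Proposition~\ref{prop:mc}, then apply the continuity of $t \mapsto t^{1/p}$ on $[0,\infty)$ via the continuous mapping theorem (which you also unpack pathwise). No gaps.
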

        \begin{proof}
            Let $f(x) = x^{1/p}$. Note that $f$ is continuous for $x \geq 0$. Since $\A$ is SPSD, from Proposition~\ref{prop:mc}, $X_M^p$ is non-negative and converges almost surely to $\normP{\A}^p$. Thus we can apply the Continuous Mapping Theorem~\cite[Theorem 17.5]{jacod2012probability} to obtain
            \[
                \lim_{M \to \infty}X_M = \lim_{M \to \infty}\left(X_M^p\right)^{1/p} = \left(\normP{\A}^p\right)^{1/p} = \normP{\A} \text{ a.s.} \qedhere %
            \]
        \end{proof}

        Recall that by Proposition~\ref{proposition: X_M EV}, $\Ev(X_M) \leq \normP{\A}$. Now, we form a bound on the bias in $X_M$. This will also be useful for establishing convergence in $L^1$ and in $L^2$.
        \begin{proposition}[Bias]\label{prop: Bias bound}
        The bias in $X_M$ is bounded as 
        \[
            |\Ev(X_M) - \normP{\A}| \leq {\normP{\A}}{}\left(\frac{2}{M}\right)^{1/2}.
        \]
        \end{proposition}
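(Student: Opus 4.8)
The plan is to control the bias through an $L^2$-type estimate and then collapse it with Jensen's inequality. Assume without loss of generality that $\A \neq 0$ (for $\A = 0$ both sides vanish) and that $p > 1$ (for $p = 1$ the estimator is unbiased, so the bound is immediate). The key idea is to re-run the argument behind Proposition~\ref{prop: X_M Var}, but instead of discarding the middle term of \eqref{eqn:vary} we keep it, since that middle term is exactly $\Ev|X_M - \normP{\A}|^2$.

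First I would invoke the inequality of Nollau et al.\ \cite[Theorem 1, Corollary 1]{nollau1995inequalities} with $Y = X_M^p$ and $\alpha = 1/p$. This choice is legitimate because $Y \geq 0$ (Proposition~\ref{prop:mc}), $\Ev(Y) = \normP{\A}^p > 0$ since $\A$ is nonzero, and $\Var(Y) = 2\|\A^p\|_F^2/M < \infty$. Noting that $Y^{1/p} = X_M$ and $(\Ev Y)^{1/p} = \normP{\A}$, the chain \eqref{eqn:vary} gives
\[
  \Ev\big|X_M - \normP{\A}\big|^2 \;\leq\; \frac{\Var(X_M^p)}{\normP{\A}^{2p-2}}.
\]
Then I would pass to the bias by the triangle inequality (Jensen for $x \mapsto |x|$) followed by Cauchy--Schwarz (Jensen for $x \mapsto x^2$):
\[
  \big|\Ev(X_M) - \normP{\A}\big| \;\leq\; \Ev\big|X_M - \normP{\A}\big| \;\leq\; \Big(\Ev\big|X_M - \normP{\A}\big|^2\Big)^{1/2} \;\leq\; \left(\frac{\Var(X_M^p)}{\normP{\A}^{2p-2}}\right)^{1/2}.
\]

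To finish I would substitute $\Var(X_M^p) = 2\|\A^p\|_F^2/M$ from Proposition~\ref{prop:mc} and simplify. The one step that is not purely mechanical is the eigenvalue bound: writing $\lambda_1,\dots,\lambda_n \geq 0$ for the eigenvalues of $\A$,
\[
  \|\A^p\|_F^2 = \tr(\A^{2p}) = \sum_{j=1}^n \lambda_j^{2p} \;\leq\; \Big(\sum_{j=1}^n \lambda_j^{p}\Big)^2 = \normP{\A}^{2p},
\]
using that a sum of squares of nonnegative numbers is at most the square of their sum. Combining the last three displays,
\[
  \big|\Ev(X_M) - \normP{\A}\big| \;\leq\; \left(\frac{2\,\normP{\A}^{2p}}{M\,\normP{\A}^{2p-2}}\right)^{1/2} = \normP{\A}\left(\frac{2}{M}\right)^{1/2},
\]
which is the claimed bound. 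The main obstacle is conceptual rather than technical: recognizing that the quantity to estimate is the $L^2$ distance $\Ev|X_M - \normP{\A}|^2$ (so the Nollau inequality applies directly with $\Ev Y$, not $\Ev X_M$, in the denominator), rather than $\Var(X_M)$, which would lead to a circular estimate. After that, the eigenvalue inequality $\|\A^p\|_F^2 \leq \normP{\A}^{2p}$ and the exponent bookkeeping close the argument.
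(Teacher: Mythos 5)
Your proof is correct and follows essentially the same route as the paper: both use the Nollau inequality with $Y = X_M^p$, $\alpha = 1/p$ to bound $\Ev|X_M - \normP{\A}|^2$, pass to the bias via the triangle and Cauchy--Schwarz inequalities, and close with the eigenvalue estimate $\|\A^p\|_F^2 \leq \normP{\A}^{2p}$. No gaps.
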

        \begin{proof}
        Without any loss in generality, assume that $\A$ is nonzero so that $\normP{\A} \neq 0$. Similarly, assume that $p > 1$ otherwise the bias is zero and the bound holds trivially.  %
        As $\A$ is SPSD, by Proposition~\ref{prop:mc}, $X_M \geq 0$ and $\Ev(X_M)$ is the $L^1$ norm of $X_M$. Then, by the reverse triangle inequality and the Cauchy-Schwarz inequality
        \begin{equation}\label{eqn:biasbound}
        \begin{aligned}
            \left|\Ev(|X_M|) - \normP{\A}\right| \leq & \> \Ev\left(\left|X_M - \normP{\A}\right|\right) \leq \left(\Ev\left|X_M - \normP{\A}\right|^2\right)^{1/2}  \\\leq & \> \left(\frac{2\|\A^p\|_F^2}{M\normP{\A}^{2p-2}}\right)^{1/2}.
            \end{aligned}
        \end{equation}
        The last inequality follows from \eqref{eqn:vary} with $Y = X_M^p$ and $\alpha = 1/p$. 
        Now using the fact that $\A$ is SPSD 
        \[ 
            \normP{\A}^{2p} = \Big(\ds\sum_{j=1}^n\lambda_j^p\Big)^{2} \geq \sum_{j=1}^n\lambda_j^{2p}  = \|\A^p\|_F^2. 
        \] 
        Therefore, we have 
        \[ \frac{\|\A^p\|_F^2}{ \normP{\A}^{2p-2}}  = \normP{\A}^2\frac{\|\A^p\|_F^2}{\normP{\A}^{2p}} \leq  \normP{\A}^2. \]
        Substitute this into~\eqref{eqn:biasbound} and simplify to obtain the desired inequality. \hfill\end{proof}

        Proposition~\ref{prop: Bias bound} can be readily used to establish $L^1$ convergence. For a fixed $p$, $X_M$ converges in $L^1(\Omega, \mathcal{F}, \mP)$ to $\normP{\A}$ since 
        \[\lim_{M \to \infty}\left|\Ev(X_M) - \normP{\A}\right| \leq \lim_{M \to \infty} {\normP{\A}}{}\left(\frac{2}{M}\right)^{1/2} = 0. \]
        Similarly, convergence in $L^2(\Omega,\mathcal{F},\mP)$ follows from the proof of Proposition~\ref{prop: Bias bound}.

  \subsection{Number of Samples for an \texorpdfstring{$(\varepsilon, \delta)$}{} Estimator of \texorpdfstring{$\normP{\A}$}{}}\label{sec: eps-delta}
        In this section we determine the minimum number of samples required to form an $(\varepsilon, \delta)$ estimator for $\normP{\A}$.         
        \begin{theorem}[$(\varepsilon,\delta)$ estimator]\label{theorem: ProbConc}
            For all $\varepsilon > 0$ and $\delta \in (0,1)$, the number of samples required for $X_M$ to be an $(\varepsilon, \delta)$ estimator for $\normP{\A}$ satisfies
            \begin{equation}\label{eq: M bound}
                M \geq 8\varepsilon^{-2}\ln\left(\frac{2}{\delta}\right)
            \end{equation}
        \end{theorem}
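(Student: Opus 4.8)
The plan is to bootstrap from Proposition~\ref{prop:mc}: part~\ref{prop:eps-delta bound X_M^p} already tells us that $X_M^p$ is an $(\varepsilon,\delta)$ estimator of $\normP{\A}^p$ as soon as $M \geq 8\varepsilon^{-2}\ln(2/\delta)$, so it suffices to show that a relative-error bound on $X_M^p$ forces the \emph{same} relative-error bound on $X_M = (X_M^p)^{1/p}$. In other words, I will prove the deterministic set inclusion
\[
\big\{\, |X_M^p - \normP{\A}^p| \leq \varepsilon\,\normP{\A}^p \,\big\} \subseteq \big\{\, |X_M - \normP{\A}| \leq \varepsilon\,\normP{\A} \,\big\},
\]
and then read off the probability bound.

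First I would clear away the degenerate cases: if $\A = 0$ both sides are zero, and if $p = 1$ the statement is literally the Gaussian entry of Table~\ref{tab: ZM properties}; so assume $\A \neq 0$ and $p > 1$. On the event on the left, $(1-\varepsilon)\normP{\A}^p \leq X_M^p \leq (1+\varepsilon)\normP{\A}^p$, and since $X_M^p \geq 0$ (Proposition~\ref{prop:mc}, part~\ref{prop:X_M^p nonneg}) I may take $p$th roots to get $(1-\varepsilon)^{1/p}\normP{\A} \leq X_M \leq (1+\varepsilon)^{1/p}\normP{\A}$. Two elementary scalar inequalities, valid for every $p \geq 1$ and $\varepsilon \in (0,1)$, finish the inclusion: concavity of $t \mapsto t^{1/p}$ on $[0,\infty)$ gives $(1+\varepsilon)^{1/p} \leq 1 + \varepsilon/p \leq 1+\varepsilon$, while $1-\varepsilon \in (0,1)$ together with $1/p \leq 1$ gives $(1-\varepsilon)^{1/p} \geq 1-\varepsilon$. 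Hence $|X_M - \normP{\A}| \leq \varepsilon\normP{\A}$ on that event.

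To conclude, I would simply apply Proposition~\ref{prop:mc}, part~\ref{prop:eps-delta bound X_M^p}: when $M \geq 8\varepsilon^{-2}\ln(2/\delta)$ the left-hand event has probability at least $1-\delta$, and by the inclusion so does the right-hand one, which is exactly the $(\varepsilon,\delta)$ property for $X_M$. There is no serious obstacle here; the one point that needs care is getting the two scalar inequalities pointing the right way (especially $(1-\varepsilon)^{1/p} \geq 1-\varepsilon$), and it is worth remarking that because these hold uniformly in $p \geq 1$ the sample count does not grow with $p$, as advertised in the introduction. If desired, one could even relax the tolerance on $X_M^p$ to any $\varepsilon' \leq 1 - (1-\varepsilon)^p$ and still land inside the target event, which for large $p$ would reduce the constant further; this refinement is not needed for the stated bound.
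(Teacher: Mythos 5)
Your proposal is correct and follows essentially the same route as the paper: reduce to the known $(\varepsilon,\delta)$ bound for the unbiased estimator $X_M^p$ (Roosta-Khorasani--Ascher, via Proposition~\ref{prop:mc}) and establish the deterministic inclusion $\calE \subseteq \calD$. The only cosmetic difference is the elementary inequality used for the inclusion --- you take $p$th roots and use $(1+\varepsilon)^{1/p}\leq 1+\varepsilon$ and $(1-\varepsilon)^{1/p}\geq 1-\varepsilon$, whereas the paper uses the bound $\left|(x+h)^{1/p}-x^{1/p}\right|\leq |h|/x^{1-1/p}$ --- and both yield the same conclusion.
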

        \begin{proof}
            Consider the measurable sets 
            \[
                \calD = \Big\{\omega \in \Omega : \big|X_M(\omega) - \normP{\A}\big|\leq \varepsilon\normP{\A}\Big\}
           \quad \text{and} \quad
                \calE = \Big\{\omega \in \Omega : \big|X_M^p(\omega) - \normP{\A}^p\big| \leq \varepsilon\normP{\A}^p\Big\}.
            \]
            Note that if $\A$ is the zero matrix then both of these events are equivalent and have probability 1. Now consider when $\A$ is a non-zero SPSD matrix.
            Roosta-Khorasani and Ascher~\cite[Theorem 3]{roosta2015improved} showed that for $\varepsilon,\delta$ as in the statement of the theorem, $\mP(\calE) \geq 1 - \delta$, if 
            \[
                M \geq 8\varepsilon^{-2}\ln\left(\frac{2}{\delta}\right).
            \]
            Thus, it is sufficient to show that $\calE \subset \calD$. Therefore, consider when $\omega \in \calE$. One can show using the difference of powers formula, that $f(x) = x^{1/p}$ satisfies 
            \[
                \Big|(x+h)^{1/p} - x^{1/p}\Big| \leq \frac{|h|}{x^{1-1/p}}
            \]
            for all $x > 0$ and $h \geq -x$.  Since $\A$ is nonzero we let $x = \normP{\A}^p$ and $h = X_M^p(\omega) - \normP{\A}^p$. Then 
            \[
                \Big|X_M(\omega) - \normP{\A}\Big| 
                 \leq  \frac{|X_M^p(\omega) - \normP{\A}^p|}{\normP{\A}^{p-1}}
                 \leq  \frac{\varepsilon\normP{\A}^p}{\normP{\A}^{p-1}}
                 =  \varepsilon\normP{\A}.
            \]
            Thus, $\omega \in \calD$. Therefore, $\calE \subset \calD$. Therefore, 
            \[
                1 - \delta \leq \mP(\calE) \leq \mP(\calD).  \qedhere %
            \]
        \end{proof}
        
        As a result of this theorem we have the following important remark:
        \begin{remark}
            For all $p \geq 1$, the minimum number of samples required for $X_M$ to become an $(\varepsilon, \delta)$ estimator for $\normP{\A}$ is independent of $p$.  
        \end{remark} %

    In contrast, numerical evidence suggests that the empirical variance decreases with increasing $p$, suggesting that the number of samples should correspondingly decrease for the same level of accuracy. However, we were not able to derive a theoretical result justifying this observation.

\section{Chebyshev Monte Carlo estimator and its analysis}\label{sec: Cheby}
Recall that Algorithm~\ref{algorithm:: mc} requires $O\left(\left\lceil\frac{p}{2}\right\rceil M\right)$ matrix-vector products and can be computationally expensive for large $p$; similarly, the Algorithm is not applicable to non-integer values of $p$. To address this issue, we use a Chebyshev polynomial approximation to approximate $\A^p$ by a lower degree Chebyshev polynomial $\psi_N(\A)$. A similar approach was used in~\cite{han2017approximating} in the context of estimating the trace of matrix functions. In this section, we propose a new estimator for the Schatten-p norm and extend our analysis of convergence on the standard Monte Carlo estimator to the estimator using Chebyshev polynomial approximation. In contrast to the previous section, where it was sufficient for $\A$ to be SPSD, in this section, we require $\A$ to be SPD.

\subsection{Chebyshev Polynomial Approximation Method}
Recall that the $N$th degree Chebyshev polynomial approximation of a continuous function $g(x)$ with $x \in [\Lmin, \Lmax]$, contained in the interval $[a,b]$,  $0 < a \leq \Lmin \leq \Lmax \leq b$, is given by
    \[
        g(x) \approx \psi_N(x) = c_0 + \sum_{j = 1}^N c_jT_j\left(\frac{2}{\Lmax- \Lmin}x + \frac{\Lmax + \Lmin}{\Lmax - \Lmin}\right)
   \]
where $T_j(x) = \cos(j\arccos(x))$ is the $j$th Chebyshev polynomial, and the coefficient $c_j$ is defined in \eqref{eq: Cheby Coeff}. In this article, since we are computing the Schatten p-norm, the function of interest is $g(x) = x^{p/2} \approx \psi_N(x)$. Based on this polynomial approximation, we can construct the Chebyshev polynomial approximation to $\A^p \approx [\psi_N(\A)]^2$. This ensures that the Chebyshev polynomial approximation to $\A^p$ is symmetric positive semidefinite. This is an important point since approximating $\A^p \approx \psi_{N'}(\A)$ using Chebyshev polynomials, does not automatically guarantee semidefiniteness.
            
In Algorithm~\ref{algorithm:: Cheby} we present an efficient algorithm for approximating $\normP{\A}$ using the Chebyshev-Monte Carlo method, based on the discussion in~\cite{han2017approximating,trefethen2008gauss}. The method combines the Chebyshev polynomial approximation for $x^{p/2}$ in $[\lambda_\text{min},\lambda_\text{max}]$ along with the three-term recurrence property of the Chebyshev polynomials. For Algorithm~\ref{algorithm:: Cheby} to be cost effective compared to Algorithm~\ref{algorithm:: mc}, the degree of the Chebyshev approximation should satisfy $N < \ds\frac{p}{2}$. Furthermore, observe that Algorithm~\ref{algorithm:: mc} requires at least a crude estimate $[a,b]$ of the range for the spectrum of $\A$. This can be accomplished using matrix free methods such as Krylov subspace methods~\cite{saad2011numerical}. In our implementation, we use the MATLAB command \verb|eigs|. 

 \begin{algorithm}[!ht]
    \caption{Constructing the Monte Carlo Estimator $Y_{M,N}$}\label{algorithm:: Cheby}
    \begin{algorithmic}
	   \STATE \textbf{Input:} a SPD matrix $\A \in \R^{n\times n}$ with eigenvalues in [$a, b$], sample number $M$, a Chebyshev polynomial degree $N$ and Schatten $p$-norm degree $p$
	   \STATE \textbf{Initialize:} $Y_{M,N} \gets 0$
	   \STATE $c \gets$ N+1 vector of Chebyshev Coefficients for $x^{p/2}$ (see \eqref{eq: Cheby Coeff})
	   \FOR {$j = 1$ \textbf{to} $M$} 
	       \STATE $\w_j \gets$ random vector with mean \textbf{0} and covariance $\mathbf{I}$
	       \STATE $\y_0^{(j)} \gets \w_j$ and $\y_1^{(j)} \gets \ds\frac{2}{b - a}\A\w_j - \ds\frac{b + a}{b - a}\w_j$
	       \STATE $\z \gets c_0\y_0^{(j)} + c_1\y_1^{(j)}$ 
	       \FOR {$k = 2$ \textbf{to} $N$}
	           \STATE $\y_2^{(j)} \gets \ds\frac{4}{b - a}\A\y_1^{(j)} - \ds\frac{2(b + a)}{b - a}\y_1^{(j)} - \y_0^{(j)}$
	           \STATE $\z \gets \z + c_k\y_2^{(j)}$
	           \STATE $\y_0^{(j)} \gets \y_1^{(j)}$ and $\y_1^{(j)} \gets \y_2^{(j)}$
	       \ENDFOR
	       \STATE $Y_{M,N} \gets Y_{M,N} + \z^T\z/M$
	    \ENDFOR
	    \STATE $Y_{M,N} \gets (Y_{M,N})^{1/p}$
    \end{algorithmic}
\end{algorithm}        
        
\subsection{Error Analysis}   
Given a Chebyshev polynomial approximation $\psi_N(x)$ to $x^{p/2}$ over the spectrum of $\A$ we define the following estimator
\begin{equation}\label{eqn:ymn}
    Y_{M,N} = \left(\frac{1}{M}\sum_{j = 1}^M \w_j^T\phi_N(\A)\w_j\right)^{1/p} 
\end{equation}
where $\phi_N(\A) = [\psi_N(\A)]^2.$ Note that, by construction, $\phi_N(\A)$ is SPSD matrix. We now extend the analysis in Section~\ref{sec: mc}. 
\begin{proposition}\label{prop: YMN properties}
    Let $Y_{M,N}$ be defined as in~\eqref{eqn:ymn}. For fixed $N$, we have
    \begin{enumerate}
        \item (Non-negative): $Y_{M,N} \geq 0$ for all $M$; \label{eq: YMN nonneg}
        \item (Almost Sure Convergence:) $\ds\lim_{M \to \infty}Y_{M,N} = \left(\tr\left(\phi_N\left(\A\right)\right)\right)^{1/p}$ a.s.;\label{eq: YMN a.s}
        \item (Expectation): $\Ev(Y_{M,N}) \leq \left(\tr\left(\phi_N\left(\A\right)\right)\right)^{1/p}$;\label{eq: YMN Ev}
        \item (Variance): $\Var(Y_{M,N}) \leq \ds\frac{2\|\phi_N(\A)\|_F^2}{M\left(\tr\left(\phi_N\left(\A\right)\right)\right)^{2-2/p}}$.\label{eq: YMN Var}
    \end{enumerate}

\end{proposition}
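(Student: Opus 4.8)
The plan is to observe that $Y_{M,N}^p = \frac{1}{M}\sum_{j=1}^M \w_j^T\phi_N(\A)\w_j$ is itself a Monte Carlo trace estimator in the sense of Definition~\ref{def: trace est} for the symmetric matrix $\phi_N(\A) = [\psi_N(\A)]^2$. Consequently the four claims follow by repeating the arguments of Section~\ref{sec: mc} essentially verbatim, with $\A^p$ replaced by $\phi_N(\A)$. The only structural facts needed are that $\phi_N(\A)$ is SPSD and, for the variance bound, that $\tr(\phi_N(\A)) > 0$.

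First I would establish non-negativity. Since $\psi_N(\A)$ is a polynomial in the symmetric matrix $\A$, it is symmetric, so $\w_j^T\phi_N(\A)\w_j = \w_j^T[\psi_N(\A)]^2\w_j = \|\psi_N(\A)\w_j\|_2^2 \geq 0$; hence $Y_{M,N}^p \geq 0$ and $Y_{M,N} = (Y_{M,N}^p)^{1/p} \geq 0$ for every $M$. The same identity shows $\phi_N(\A)$ is SPSD, so $\tr(\phi_N(\A)) \geq 0$; because $\A$ is SPD, for the degrees $N$ of practical interest $\psi_N$ is not identically zero on $\sigma(\A)$, so $\phi_N(\A) \neq 0$ and $\tr(\phi_N(\A)) > 0$. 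I would either make this a standing assumption or note that otherwise $Y_{M,N} \equiv 0$ and all four statements hold trivially.

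Next, for almost sure convergence, $Y_{M,N}^p$ is the average of the i.i.d. $L^2$ random variables $\w_j^T\phi_N(\A)\w_j$ with common mean $\tr(\phi_N(\A))$, so the strong law of large numbers gives $Y_{M,N}^p \to \tr(\phi_N(\A))$ a.s.; applying the Continuous Mapping Theorem with the continuous map $x \mapsto x^{1/p}$ on $[0,\infty)$, exactly as in Proposition~\ref{prop: X_M a.s.}, yields $Y_{M,N} \to (\tr(\phi_N(\A)))^{1/p}$ a.s. For the expectation bound, $\Ev(Y_{M,N}^p) = \tr(\phi_N(\A))$ and Jensen's inequality applied to the concave function $x \mapsto x^{1/p}$ (as in Proposition~\ref{proposition: X_M EV}) gives $\Ev(Y_{M,N}) \leq (\tr(\phi_N(\A)))^{1/p}$. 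For the variance bound I would apply the Nollau--Dassios inequality~\eqref{eqn:vary} with $Y = Y_{M,N}^p \geq 0$ and $\alpha = 1/p$, using $\Ev(Y) = \tr(\phi_N(\A)) > 0$ and $\Var(Y) = 2\|\phi_N(\A)\|_F^2/M$ (the Gaussian trace-estimator variance from Table~\ref{tab: ZM properties}, valid since the $\w_j$ are standard normal), which reproduces the stated bound, mirroring Proposition~\ref{prop: X_M Var}.

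Since the proof is essentially a transcription, there is no serious obstacle; the one point requiring care is the degenerate case $\phi_N(\A) = 0$ (equivalently $\psi_N \equiv 0$ on $\sigma(\A)$), which would make the right-hand side of the variance bound an indeterminate $0/0$. I would dispose of this by noting that it cannot occur for the relevant $N$ when $\A$ is SPD, or by stating the variance bound under the standing assumption $\phi_N(\A) \neq 0$ and observing that otherwise $Y_{M,N}$ is identically zero with zero variance.
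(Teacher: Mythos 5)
Your proposal is correct and follows essentially the same route as the paper, which simply notes that $\phi_N(\A)$ is SPSD and then applies Propositions~\ref{prop: X_M a.s.}, \ref{proposition: X_M EV}, and \ref{prop: X_M Var} with $\A^p$ replaced by $\phi_N(\A)$, omitting the details you spell out. Your explicit handling of the degenerate case $\phi_N(\A)=0$ is a small point of added care that the paper glosses over.
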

\begin{proof}
     Note that since $\phi_N(\A)$ is SPSD, the non-negativity of $Y_{M,N}$ is immediate. Furthermore, applying the results of Propositions \ref{prop: X_M a.s.}, \ref{proposition: X_M EV}, and \ref{prop: X_M Var} to $\phi_N(\A)$, one can derive properties (\ref{eq: YMN a.s}), (\ref{eq: YMN Ev}), and (\ref{eq: YMN Var}) respectively. We omit the details. \hfill
\end{proof}

In this next result, we derive a bound on the smallest degree of the Chebyshev polynomial to ensure a user-defined relative error in the Schatten-p estimator.

\begin{proposition}\label{prop:ymnrelerr}
    Let $0  < \varepsilon \leq 1$, $p \geq 1$, $q = p/2$, and $\kappa = \sqrt{\frac{b}{a}}$. If the degree of the Chebyshev polynomial, $N$, satisfies
    \begin{equation}\label{eq: N bound}
    N \geq \ds\frac{\log\left(\ds\frac{4}{\varepsilon}(\kappa^2 +1)^q(\kappa - 1)\left(\kappa^p + \sqrt{\frac{\varepsilon}{2} + \kappa^{2p}}\right)\right)}{\log\left(\ds\frac{\kappa+1}{\kappa-1}\right)},
  \end{equation}
 then $|\tr(\phi_N(\A)) - \normP{\A}^p| \leq \ds\frac{\varepsilon}{2}\normP{\A}^p$.
\end{proposition}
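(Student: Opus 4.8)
The plan is to reduce everything to Trefethen's uniform Chebyshev error bound~\eqref{eq: Trefethen cheby err bound} together with crude but explicit eigenvalue estimates. Write $g(x)=x^{q}$ with $q=p/2$ and let $\psi_N$ be its degree-$N$ Chebyshev approximant on $[a,b]$, so that, denoting the eigenvalues of $\A$ by $\lambda_1,\dots,\lambda_n\in[a,b]$, we have $\tr(\phi_N(\A))=\sum_i\psi_N(\lambda_i)^2$ while $\normP{\A}^p=\tr(\A^p)=\sum_i\lambda_i^p=\sum_i g(\lambda_i)^2$. The strategy is: (i) obtain a uniform bound $\sup_{x\in[a,b]}|g(x)-\psi_N(x)|\le\delta_N$ with $\delta_N$ explicit in $N$, $\varepsilon$, $\kappa$; (ii) propagate $\delta_N$ through the trace; (iii) solve for the $N$ that makes the resulting bound at most $\tfrac{\varepsilon}{2}\normP{\A}^p$.

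For step (i) I would pin down the optimal Bernstein ellipse. After the affine map taking $[a,b]$ onto $[-1,1]$, the only singularity of $g$ is the branch point of $x\mapsto x^{q}$ at $x=0$, whose preimage is the real point $t_0=-\tfrac{b+a}{b-a}<-1$. Take the largest Bernstein ellipse $E$ (foci $\pm1$) whose interior avoids $t_0$, i.e.\ with left vertex at $t_0$; a one-line computation gives semi-axis sum $\rho=s+\sqrt{s^2-1}$ with $s=\tfrac{b+a}{b-a}=\tfrac{\kappa^2+1}{\kappa^2-1}$, which collapses to $\rho=\tfrac{\kappa+1}{\kappa-1}$ (so $\rho-1=\tfrac{2}{\kappa-1}$). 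The image of $E$ under the inverse map is an ellipse centered at $\tfrac{b+a}{2}$ with horizontal semi-axis $\tfrac{b+a}{2}$, on which $|x|$ is largest at the right vertex $x=b+a$; hence $U=\sup_{z\in E}|g(z)|=(b+a)^{q}$. Then~\eqref{eq: Trefethen cheby err bound} gives $\delta_N=\tfrac{4U}{\rho^N(\rho-1)}=\tfrac{2(b+a)^{q}(\kappa-1)}{\rho^N}$. (Technical nuisance: the branch point lies on $\partial E$, so one should apply~\eqref{eq: Trefethen cheby err bound} on ellipses $E_{\rho'}$ with $\rho'<\rho$ and let $\rho'\uparrow\rho$; since $|g|\to0$ at the branch point the limiting bound is still valid.)

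For steps (ii)--(iii): factor $\psi_N(\lambda_i)^2-\lambda_i^p=(\psi_N(\lambda_i)-\lambda_i^{q})(\psi_N(\lambda_i)+\lambda_i^{q})$, bound $|\psi_N(\lambda_i)|\le\lambda_i^{q}+\delta_N$ and $\lambda_i\le b$ to get
\[
|\tr(\phi_N(\A))-\normP{\A}^p|\le\sum_{i=1}^n\delta_N\!\left(2\lambda_i^{q}+\delta_N\right)\le n\,\delta_N\!\left(2b^{q}+\delta_N\right),
\]
and use $\lambda_i\ge a$ to get $\normP{\A}^p\ge na^p=na^{2q}$. So it suffices that $\delta_N(2b^{q}+\delta_N)\le\tfrac{\varepsilon}{2}a^{2q}$; since the left side increases in $\delta_N\ge0$, this is equivalent to $\delta_N\le -b^{q}+\sqrt{b^{2q}+\tfrac{\varepsilon}{2}a^{2q}}$, i.e.\ to $\rho^N\ge 2(b+a)^{q}(\kappa-1)/(\sqrt{b^{2q}+\tfrac{\varepsilon}{2}a^{2q}}-b^{q})$. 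Rationalizing the denominator (multiply by $\sqrt{b^{2q}+\tfrac{\varepsilon}{2}a^{2q}}+b^{q}$) and substituting $b=\kappa^2 a$, so that $b^{q}=a^{q}\kappa^{p}$, $b^{2q}=a^{2q}\kappa^{2p}$ and $(b+a)^{q}=a^{q}(\kappa^2+1)^{q}$, all powers of $a$ cancel and the condition becomes $\rho^N\ge\tfrac{4}{\varepsilon}(\kappa^2+1)^{q}(\kappa-1)(\kappa^{p}+\sqrt{\tfrac{\varepsilon}{2}+\kappa^{2p}})$; taking logarithms and dividing by $\log\rho=\log\tfrac{\kappa+1}{\kappa-1}>0$ yields~\eqref{eq: N bound}.

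The main obstacle is step (i): correctly identifying the optimal Bernstein ellipse and producing closed forms for both $\rho$ and $U=\sup_E|g|$ in terms of $\kappa$, while handling the branch point sitting on the ellipse boundary. Everything after that --- the trace inequality, the quadratic in $\delta_N$, and the simplification in which the $a^{2q}$ factors vanish --- is routine algebra. (The degenerate case $\kappa=1$, i.e.\ $\A$ a scalar multiple of the identity, should be dispatched separately, where even $\psi_0$ reproduces $g$ on the spectrum exactly.)
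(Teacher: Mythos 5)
Your proposal is correct and follows essentially the same route as the paper's proof: the same Bernstein ellipse through $\pm\frac{b+a}{b-a}$ with $\rho=\frac{\kappa+1}{\kappa-1}$ and $U=(b+a)^q$, the same bound $|x^p-\psi_N(x)^2|\le\delta_N(2b^q+\delta_N)$ combined with $\normP{\A}^p\ge na^p$, and the same quadratic-plus-rationalization to isolate $\rho^N$. Your extra care about the branch point sitting on $\partial E$ (taking $\rho'\uparrow\rho$) is a detail the paper delegates to its citation of Han et al., Corollary 2.2.
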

\begin{proof}
The proof follows a similar strategy to~\cite[Theorem 3.1]{han2017approximating} and has several steps.
\paragraph{Error in terms of Chebyshev polynomials} The absolute error in $\normP{\A}$ can be bounded using the approximation properties of the Chebyshev polynomials.
\BEA
    \left|\normP{\A}^p - \tr(\phi_N(\A))\right| & = & \left|\ds\sum_{j=1}^n \lambda_j^p - \ds\sum_{j=1}^n \phi_N(\lambda_j)\right|\\
  & \leq & \sum_{j=1}^n \left|\lambda_j^p - \phi_N(\lambda_j)\right|\\
  & \leq & \max_{1 \leq j \leq n} n\left|\lambda_j^p - \phi_N(\lambda_j)\right|\\
   & \leq & n\max_{x \in [a,b]} \left|x^p - \phi_N(x)\right|.
\EEA
Since $\phi_N(x) = \psi_N^2(x)$, by repeated use of the triangle inequality
\BEA
    \left|x^p - \psi_N^2(x)\right| & = & \left|x^{2q} + x^q\psi_N(x) - x^q\psi_N(x) + \psi_N^2(x)\right|\\
    & \leq &
    \left|x^q\right|\left|x^q - \psi_N(x)\right| + \left|\psi_N(x)\right|\left|x^q - \psi_N(x)\right|\\
    & \leq & 2\left|x^q\right|\left|x^q - \psi_N(x)\right| + \left|x^q - \psi_N(x)\right|^2.
\EEA 
In the second step, we wrote $|\psi_N(x)| = |\psi_N(x) - x^q + x^q|$ and applied the triangle inequality.

\paragraph{Chebyshev polynomial approximation} Let ellipse $E$ in the complex plane with foci at $\pm 1$  and passing through the point $\left(\ds\frac{b + a}{b-a}, 0\right)$. The sum of major and minor semi-axes, denoted by $\rho > 1$, can be computed as 
\[     \rho = \frac{b + a}{b - a} + \sqrt{\left(\frac{b + a}{b - a}\right)^2 -1} = \frac{\sqrt{b} + \sqrt{a}}{\sqrt{b} - \sqrt{a}} = \frac{\kappa + 1}{\kappa - 1}
\]
where $\kappa = \sqrt{\frac{b}{a}}$ was defined in the statement of the proposition.

From~\cite[Corollary 2.2]{han2017approximating}, since $g(x) = x^{q}$ is analytic on the inside of the ellipse $E$, we have 
\[
    \max_{x \in [a,b]}\left|x^q - \psi_N(x)\right| \leq \frac{4U}{(\rho -1)\rho^N}.
\]
where the scalar $U$ satisfies
$$U = \ds\max_{z \in E} \left|g\left(\frac{b-a}{2}z + \frac{b+a}{2}\right)\right| = (b+a)^q .$$ 

\paragraph{Converting absolute error into relative error} Therefore, by the first two steps, 
\begin{equation}\label{eq: max Err}
    \max_{x \in [a,b]}|x^p - \phi_N(x)| \leq \left(2b^q + \frac{4U}{(\rho - 1)\rho^N}\right)\frac{4U}{(\rho - 1)\rho^N}. %
\end{equation}
We want to find $N$ such that $\ds\max_{x \in [a,b]}|x^p - \phi_N(x)| \leq \varepsilon a^p/2$. If such an $N$ can be found, then
\[ |\tr(\phi_N(\A)) - \normP{\A}^p| \leq \ds n \max_{x \in [a,b]}|x^p - \phi_N(x)| \leq \ds \frac{n\varepsilon a^p }{2} \leq \frac{\varepsilon}{2} \normP{\A}^p, \]
as desired. We now show that such an $N$ can be found. 

\paragraph{Solving for $N$} To this end, consider
\BEA
    \frac{\varepsilon a^p}{2} & \geq & \left(2b^q + \frac{4U}{(\rho - 1)\rho^N}\right)\frac{4U}{(\rho - 1)\rho^N} \\
    & = & \left(\frac{4U}{(\rho - 1)\rho^N}\right)^2 + 2b^q\frac{4U}{(\rho - 1)\rho^N} + b^{2q} - b^{2q}\\
    & = & \left(\frac{4U}{(\rho - 1)\rho^N} + b^q\right)^2 - b^{2q}.
\EEA 
Simplifying this expression, we get 
$$ \rho^N \geq
    \ds\frac{4U}{(\rho - 1)\left(\sqrt{\ds\frac{\varepsilon a^p}{2} + b^{p}} -b^q \right)}.$$
We have the elementary identity $$\ds\frac{1}{\sqrt{x+d} - \sqrt{x}} \ds\frac{\sqrt{x+d} + \sqrt{x}}{\sqrt{x+d} + \sqrt{x}} = \ds\frac{\sqrt{x+d} + \sqrt{x}}{d}$$ for all $x,d \geq 0$. Applying this inequality with $x= b^q$ and $d = \varepsilon a^p/2$, we get 
\[
    \rho^N \geq
    \ds\frac{4U}{(\rho - 1)\left(\sqrt{\ds\frac{\varepsilon a^p}{2} + b^{p}} -b^q \right)} = \ds\frac{8U\left( b^q + \sqrt{\frac{\varepsilon a^p}{2} + b^p}\right)}{(\rho - 1)\left(\varepsilon a^p\right)}.
\]
Since $\rho > 1$,  $N$ is bounded from below as
\begin{equation}\label{eq: N gen bound}
    N \geq \ds\frac{1}{\log(\rho)} \log\left(\ds\frac{8U\left( b^q + \sqrt{\frac{\varepsilon a^p}{2} + b^p}\right)}{(\rho - 1)\left(\varepsilon a^p \right)}\right)%
\end{equation}
Substitute the expressions for $U$ and $\rho$ into \eqref{eq: N gen bound} and simplify to get \eqref{eq: N bound}.\hfill
\end{proof}

In Figure~\ref{fig: Nbound}, the bound in~\eqref{eq: N bound} is plotted with  various values of $p$ and $\kappa \in [1,2]$ and $\varepsilon = 0.1$. Here a dot is placed when the value of $N$ is larger than $q = p/2$, suggesting that the bound is pessimistic for condition numbers larger than 2. However, using arguments in Newman and Rivlin~\cite[Theorem 2]{newman1976approximation}, we conjecture that $N = O(\sqrt{q})$ should be sufficient to accurately approximate $\A^p$. 
Similarly, one can use a low-degree rational approximation to accurately approximate $\A^p$; see~\cite{nakatsukasa2018rational} for additional details.

\begin{figure}[!ht]
    \centering
    \includegraphics[width = .75\textwidth]{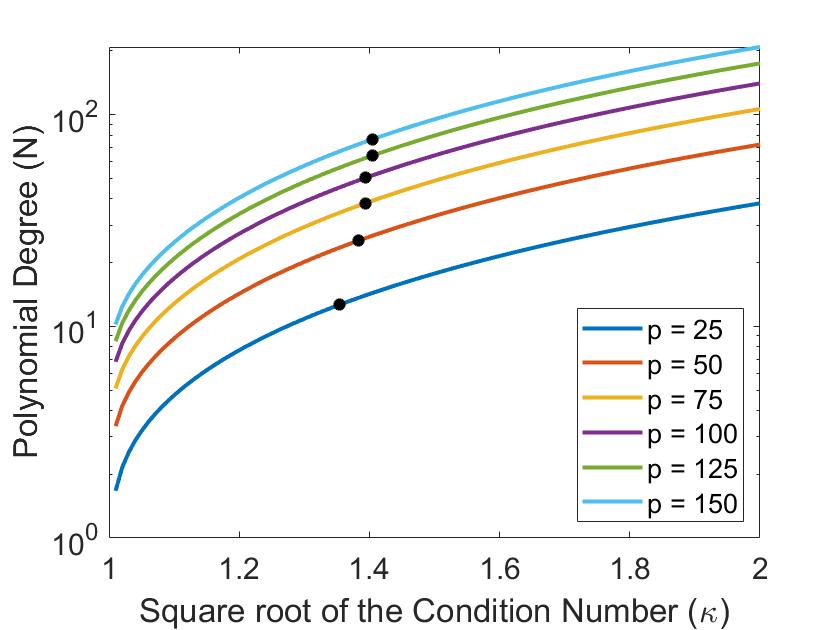}
    \caption{Plotting the values of \eqref{eq: N bound} for $p = 25, 50, 75, 100, 125$ and 150 with $\kappa \in [1,2]$ and $\varepsilon = 0.1$. This corresponds to approximating $\A^{q}$, where $p=2q$ and $\A$ has a condition number between 1 and 4. Here a black dot has been placed to mark the first instance that $N > q$.}
    \label{fig: Nbound}
\end{figure}

Although Proposition~\ref{prop:ymnrelerr} guarantees that $\tr(\phi_N(\A))$ has a small relative error, it is computationally challenging to implement since it involves constructing $\phi_N(\A)$ explicitly. Therefore, we approximate its trace using the Monte Carlo estimator $Y_{M,N}$. Using this proposition, we derive bounds for the absolute error in the $L^1$ sense (i.e., the bias) and the number of samples required for an $(\varepsilon,\delta)$-estimator for $\normP{\A}$.

\begin{theorem}\label{theorem: YMN eps delta}
Consider the same setup as in~\ref{prop:ymnrelerr}. Let $Y_{M,N}$ be defined as in~\eqref{eqn:ymn} and let $N$ satisfy \eqref{eq: N bound}, then
\begin{enumerate}
    \item ($L^1$ bound): $\left| \Ev\left(Y_{M,N}\right) - \normP{\A}\right| \leq (1 + \frac{\varepsilon}{2})\normP{\A}\left(\ds\frac{2}{M}\right)^{1/2} + \frac{\varepsilon}{2}\normP{\A}$\label{eq: YMN L1 bound}
    \item (($\varepsilon, \delta$) estimator): if $M \geq 72\varepsilon^{-2}\ln\left(\frac{2}{\delta}\right)$ then $Y_{M,N}$ is an $(\varepsilon, \delta)$ estimator for $\normP{\A}$.
\end{enumerate}
\end{theorem}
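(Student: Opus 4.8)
The plan is to combine the deterministic bound from Proposition~\ref{prop:ymnrelerr} with the Monte Carlo bounds of Proposition~\ref{prop: YMN properties} via a triangle-inequality splitting. Write $t_N := \left(\tr(\phi_N(\A))\right)^{1/p}$ for the "exact Chebyshev" quantity, which $Y_{M,N}$ estimates. The natural decomposition is
\[
    \left|\Ev(Y_{M,N}) - \normP{\A}\right| \leq \left|\Ev(Y_{M,N}) - t_N\right| + \left|t_N - \normP{\A}\right|.
\]
For the first term, I would apply the bias bound of Proposition~\ref{prop: Bias bound}, but now with $\A^p$ replaced by the SPSD matrix $\phi_N(\A)$; tracing through that proof verbatim (as Proposition~\ref{prop: YMN properties} already does for the other moments) gives $\left|\Ev(Y_{M,N}) - t_N\right| \leq t_N (2/M)^{1/2}$. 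For the second term, Proposition~\ref{prop:ymnrelerr} gives $|\tr(\phi_N(\A)) - \normP{\A}^p| \leq \tfrac{\varepsilon}{2}\normP{\A}^p$, so $\tr(\phi_N(\A)) \leq (1+\tfrac{\varepsilon}{2})\normP{\A}^p$; taking $p$th roots and using concavity of $x^{1/p}$ (or just monotonicity plus $(1+\tfrac\varepsilon2)^{1/p} \leq 1 + \tfrac\varepsilon2$ for $p\geq1$) yields $t_N \leq (1+\tfrac{\varepsilon}{2})^{1/p}\normP{\A} \leq (1+\tfrac\varepsilon2)\normP{\A}$. For the lower side, $\tr(\phi_N(\A)) \geq (1-\tfrac\varepsilon2)\normP{\A}^p$ gives $t_N \geq (1-\tfrac\varepsilon2)^{1/p}\normP{\A} \geq (1-\tfrac\varepsilon2)\normP{\A}$ is false in the wrong direction — rather, I need $|t_N - \normP{\A}| \leq \tfrac\varepsilon2\normP{\A}$, which follows from the elementary inequality $|(x+h)^{1/p} - x^{1/p}| \leq |h|/x^{1-1/p}$ already used in the proof of Theorem~\ref{theorem: ProbConc}, applied with $x = \normP{\A}^p$, $h = \tr(\phi_N(\A)) - \normP{\A}^p$, giving $|t_N - \normP{\A}| \leq \tfrac\varepsilon2\normP{\A}$. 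Substituting the bound $t_N \leq (1+\tfrac\varepsilon2)\normP{\A}$ into the first term then gives part~\ref{eq: YMN L1 bound}.

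For the $(\varepsilon,\delta)$ claim I would again split the event. Define the events
\[
    \calD = \left\{\left|Y_{M,N} - \normP{\A}\right| \leq \varepsilon\normP{\A}\right\}, \qquad
    \calE = \left\{\left|Y_{M,N}^p - \tr(\phi_N(\A))\right| \leq \tfrac{\varepsilon}{3}\tr(\phi_N(\A))\right\}.
\]
On $\calE$, using $|(x+h)^{1/p} - x^{1/p}| \leq |h|/x^{1-1/p}$ with $x = \tr(\phi_N(\A))$ gives $|Y_{M,N} - t_N| \leq \tfrac{\varepsilon}{3}t_N \leq \tfrac{\varepsilon}{3}(1+\tfrac\varepsilon2)\normP{\A}$; combining with $|t_N - \normP{\A}| \leq \tfrac{\varepsilon}{2}\normP{\A}$ from Proposition~\ref{prop:ymnrelerr} yields $|Y_{M,N} - \normP{\A}| \leq \left(\tfrac\varepsilon3(1+\tfrac\varepsilon2) + \tfrac\varepsilon2\right)\normP{\A}$; since $\varepsilon \leq 1$ this is at most $(\tfrac\varepsilon3\cdot\tfrac32 + \tfrac\varepsilon2)\normP{\A} = \varepsilon\normP{\A}$, so $\calE \subset \calD$. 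By Proposition~\ref{prop:mc}\eqref{prop:eps-delta bound X_M^p} applied to $\phi_N(\A)$ (i.e.\ the Roosta-Khorasani–Ascher bound with relative tolerance $\varepsilon/3$), $\mP(\calE) \geq 1-\delta$ whenever $M \geq 8(\varepsilon/3)^{-2}\ln(2/\delta) = 72\varepsilon^{-2}\ln(2/\delta)$, hence $\mP(\calD) \geq 1-\delta$, which is the claim. The zero-matrix case is vacuous since $\A$ is SPD here.

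The main obstacle is bookkeeping the constants so that the two sources of error — the deterministic Chebyshev truncation error (controlled at level $\varepsilon/2$ by the choice of $N$ in~\eqref{eq: N bound}) and the stochastic Monte Carlo error — add up to exactly $\varepsilon$ relative error; this forces the stochastic tolerance to be $\varepsilon/3$ (or some comparable fraction), which is precisely what produces the factor $72 = 8 \cdot 9$ in the sample-count bound. A secondary point of care is that all the auxiliary inequalities ($|(x+h)^{1/p} - x^{1/p}| \leq |h| x^{1/p-1}$, the concavity/root estimates, and the reuse of Propositions~\ref{proposition: X_M EV}–\ref{prop: Bias bound} with $\A^p$ replaced by $\phi_N(\A)$) require $\tr(\phi_N(\A)) > 0$, which holds because $\A$ is SPD and $\phi_N(\A) = [\psi_N(\A)]^2$ with $\psi_N$ a good approximation to the positive function $x^{p/2}$ on $[a,b]$, so $\phi_N(\A)$ is in fact SPD for $N$ large enough; I would note this explicitly. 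No genuinely hard estimate is needed beyond what is already established.
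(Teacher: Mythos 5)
Your proposal is correct and follows essentially the same route as the paper: the same triangle-inequality splitting into Chebyshev truncation error (controlled at level $\varepsilon/2$ by Proposition~\ref{prop:ymnrelerr}) and Monte Carlo error (controlled at level $\varepsilon/3$ via the Roosta-Khorasani--Ascher bound, giving $72 = 8\cdot 9$), with the bias bound of Proposition~\ref{prop: Bias bound} applied to $\phi_N(\A)$ for the $L^1$ part. The only differences are cosmetic --- you take $p$th roots before combining the two error sources in the probabilistic part, whereas the paper combines them at the level of $p$th powers and then invokes the $\calE \subset \calD$ argument of Theorem~\ref{theorem: ProbConc}; and your parenthetical worry about the lower bound $t_N \geq (1-\tfrac{\varepsilon}{2})\normP{\A}$ being ``in the wrong direction'' is unfounded (that inequality does hold), though your fallback via $|(x+h)^{1/p}-x^{1/p}|\leq |h|x^{1/p-1}$ is equally valid.
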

\begin{proof}
First consider the $L^1$ bound on $Y_{M,N}$. From Proposition~\ref{prop:ymnrelerr}, we have 
\[ \left(1-\frac{\varepsilon}{2}\right)\normP{\A}^p \leq \tr(\phi_N(\A)) \leq\left(1 + \frac{\varepsilon}{2}\right)\normP{\A}^p.\]
From the simple identity $(1-x)^p \leq (1-x) \leq (1+x) \leq (1+x)^p$ for $0 \leq x \leq 1$ and $p \geq 1$, we get 
\[ \left(1-\frac{\varepsilon}{2}\right)\normP{\A} \leq (\tr(\phi_N(\A))^{1/p} \leq \left(1 + \frac{\varepsilon}{2}\right)\normP{\A},\]
or $|(\tr(\phi_N(\A))^{1/p} - \normP{\A}| \leq \frac{\varepsilon}{2}\normP{\A}$. 

By the triangle inequality and by applying Proposition~\ref{prop: Bias bound} to $\phi_N(\A)$,  we find
\BEA 
    \left|\Ev\left(Y_{M,N}\right) - \normP{\A}\right| & \leq & \left|\Ev\left(Y_{M,N}\right) - \left(\tr\left(\phi_N\left(\A\right)\right)\right)^{1/p}\right| + \left|\left(\tr\left(\phi_N\left(\A\right)\right)\right)^{1/p} - \normP{\A}\right|\\
   & \leq & \left(\frac{2}{M}\right)^{1/2}\left(\tr\left(\phi_N\left(\A\right)\right)\right)^{1/p} + \left|\left(\tr\left(\phi_N\left(\A\right)\right)\right)^{1/p} - \normP{\A}\right| \\
    & \leq & \left(1 + \frac{\varepsilon}{2}\right)\normP{\A}\left(\ds\frac{2}{M}\right)^{1/2} + \frac{\varepsilon}{2}\normP{\A}.
\EEA 
If $M \geq 72\varepsilon^2\ln\left(\ds\frac{2}{\delta}\right)$ then, by~\cite[Theorem 3]{roosta2015improved},
\[
    \Pr\left(\left|Y_{M,N}^p - \tr(\phi_N(\A))\right| \leq \frac{\varepsilon}{3}\tr(\phi_N(\A))\right) \geq 1 - \delta.
\]
Furthermore as $\varepsilon \in (0,1)$, then $\tr(\phi_N(\A)) \leq \left(1 + \frac{\varepsilon}{2}\right)\normP{\A}^p \leq \frac{3}{2}\normP{\A}^p.$ Then with probability at least $1-\delta$
\[
    \left|Y_{M,N}^p - \tr(\phi_N(\A))\right| \leq \frac{\varepsilon}{2}\normP{\A}^p.
\]
Using the triangle inequality, with the same probability 
\[ \begin{aligned} \left|Y_{M,N}^p - \normP{\A}^p\right| \leq& \> \left|Y_{M,N}^p - \tr(\phi_N(\A))\right| + |\tr(\phi_N(\A)) - \normP{\A}^p| \\ \leq  & \>\frac{\varepsilon}{2}\normP{\A}^p + \frac{\varepsilon}{2}\normP{\A}^p = \varepsilon\normP{\A}^p. \end{aligned}\]
Now consider the following measurable sets
\[\begin{aligned}
    \calE = &\> \left\{\omega \in \Omega \Big| |Y_{M,N}^p(\omega) - \normP{\A}^p| \leq \varepsilon\normP{\A}^p\right\}\\
    \calD = &\> \left\{\omega \in \Omega \Big| |Y_{M,N}(\omega) - \normP{\A}| \leq \varepsilon\normP{\A}\right\}.
\end{aligned}
\]
Using a similar argument as in Proposition~\ref{theorem: ProbConc} we can show $1 - \delta \leq \Pr(\calE) \leq \Pr(\calD)$. Thus $Y_{M,N}$ is an $(\varepsilon, \delta)$ estimator for $\normP{A}$.\hfill
\end{proof}

Once again, we point out that the number of samples required for an $(\varepsilon,\delta)$ estimator for $\normP{A}$ is independent of the degree $p$, provided $N$ is sufficiently large.

\section{Numerical Experiments}\label{sec: results}
In this section, we will present numerical experiments demonstrating the performance of the estimators and the convergence analysis on several test matrices. The first set of test matrices are synthetically generated, the second set comes from the SuiteSparse collection, and the final test matrix arises from an application to Optimal Experimental Design (OED).
        
\subsection{Choices of Matrices}
\subsubsection{Synthetic Test Matrices}
For the matrix $\A \in \R^{100 \times 100}$ we constructed the following test matrices; the test problems are of the form $\A = \mathbf{QDQ}^T$, where $\mathbf{D}$ represent the eigenvalues taking particular values. The orthogonal matrix
$\mathbf{Q} \in \R^{100\times 100}$ is constructed by first generating a standard Gaussian random matrix, and then computing its QR factorization.  
      
\begin{enumerate}
    \item  \textbf{Linear Decay:} The first test matrix $\A_\text{linear} = \mathbf{Q}\D_\text{lin}\mathbf{Q}^T \in \R^{100 \times 100}$ has eigenvalues 
    \[ \D_\text{lin} = \text{diag}(6,7,\dots,105).\]
        \item \textbf{Clustered:} The second test matrix takes the form $\A_\text{clustered} = \mathbf{Q}\D_\text{clus}\mathbf{Q}^T$ where
        \[
           \D_\text{clus} = \text{diag}(\underbrace{100, \dots 100}_{20}, \underbrace{1, \dots , 1}_{80}).
        \]
    \item \textbf{Quadratic Decay:} The test matrix takes the form  $\A_{\text{quad}} = \mathbf{Q}\D_\text{quad}\mathbf{Q}^T$
        \[
           \D_{\text{quad}} = \text{diag}(1, 2^{-2}, \dots, 100^{-2}).
        \]
    \item  \textbf{Exponential Decay:} The test matrix takes the form $\A_{\text{exp}} = \mathbf{Q}\D_\text{exp}\mathbf{Q}^T$
        \[
            \A_{\text{exp}} = \text{diag}(0.9^1, \dots, 0.9^{100}).
        \]
\end{enumerate}  
       
The test matrices simulate different scenarios of eigenvalue distributions for an SPSD matrix. We have plotted the eigenvalue distributions in Figure~\ref{fig:: eigs} to illustrate these distributions.  
    \begin{figure}[t]
        \centering
        \includegraphics[scale = .4]{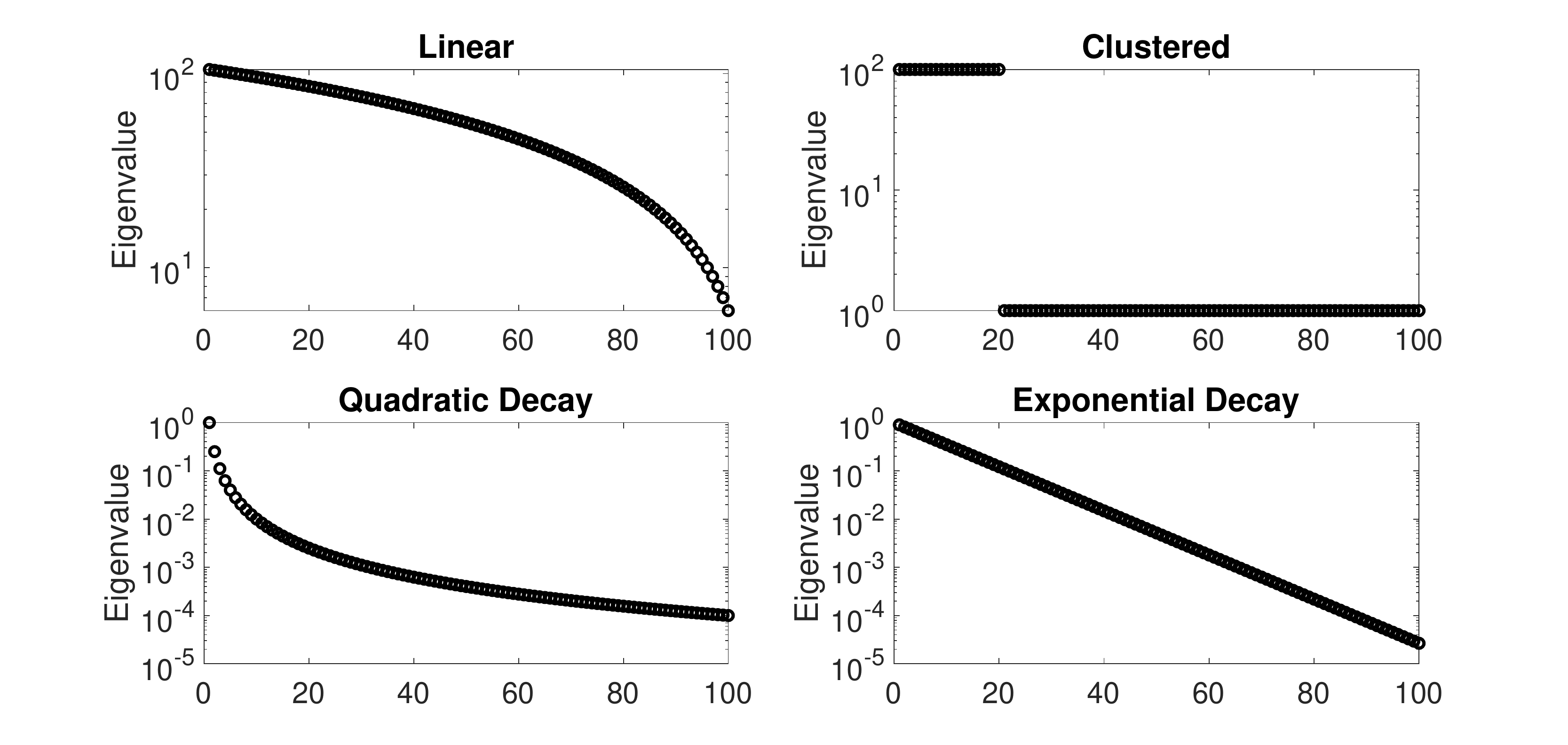}
        \caption{Semi-log plots of the eigenvalue distributions for each of the test matrices; each of size $100\times 100$.}
        \label{fig:: eigs}
    \end{figure}
        
\subsubsection{Test matrices from SuiteSparse collection}
    In addition to the test matrices described above, we also consider two relatively large matrices from the Suite Sparse matrix collection \cite{kolodziej2019suitesparse}. In particular, we choose
        \begin{enumerate}
            \item Trefethen$\_700$ matrix, a $700 \times 700$ SPD matrix from an application in combinatorics with a condition number approximately $4.71\times 10^3$, and %
            \item mhd4800b matrix, a $4800 \times 4800$ SPD matrix from an application in electrohydrodynamics with a condition number approximately $8.16\times 10^{13}$. %
        \end{enumerate} 
        
        \subsubsection{Application to Optimal Experimental Design}\label{ssec:oed}
        For the last test matrix, we return to our motivating problem from Optimal Experimental Design (OED).
        Our goal is to compute the Schatten $p$-norm of the posterior covariance operator, arising from a Bayesian linear inverse problem. 
        
        \newcommand{\fwd}{\mathbf{F}}
        \newcommand{\dpar}{\boldsymbol{\phi}}
        We consider the inverse problem of estimating the initial state in 
        the following 1D heat equation:
        \begin{equation}
            \begin{cases}\label{eq: PDE}
                u_t = ku_{xx} & x \in [0,1], \; t \in (0, t_f],\\
                u(x,0) = \phi(x) & x \in [0,1],\\
                u(0,t) = u(1, t) = 0 & t \in (0,t_f].
            \end{cases}
        \end{equation}
        Here $\phi(x)$ is an unknown initial state, which we seek to estimate using sensor measurements of the temperature at
        a few observation times. In~\eqref{eq: PDE},
        $k$ is the diffusion coefficient, which we choose to be
        $k = 2\times10^{-4}$.
        
        After discretization, the goal is to estimate the discretized
        parameter $\boldsymbol\phi$ from
        \begin{equation}\label{equ:inv}
            \fwd \dpar + \boldsymbol\eta = \mathbf{d}.
        \end{equation}
        Here $\fwd$ is the parameter-to-observable map, which maps
        the (discretized) initial state $\dpar$ to spatio-temporal observations, $\dpar$ is the discretized inversion parameter (the initial state), $\boldsymbol\eta$ is a random variable modeling 
        measurement noise, and $\mathbf{d}$ is measurement data.
        
        We discretize the problem~\eqref{eq: PDE} using
        finite-differences in space and implicit Euler in time. 
        Thus, an application of $\fwd$ to a vector requires solving~\eqref{eq: PDE}, and extracting solution values at the measurement points and at measurement times. Here we take measurements at 17 equally spaced
        sensors in the spatial domain $[0, 1]$ and at observation times
        $\{0.25, 0.5, 0.75, 1\}$. We assume that $\boldsymbol\eta$ in
        \eqref{equ:inv} is multivariate Gaussian with mean zero and
        covariance given by $\sigma \mathbf{I}$ with $\sigma = 0.002$ (corresponding to 0.1\% noise).
        
        We consider a Bayesian formulation~\cite{tarantola2005inverse} of this inverse problem. Assuming a Gaussian prior, we also have a Gaussian posterior. One possible measure of the posterior uncertainty is given by the Schatten $p$-norm of the posterior covariance operator. The latter is given by the following $254 \times 254$ matrix
        \[
           \postcov = (\sigma^{-2} \mathbf{F}^T \mathbf{F} + \prcov^{-1})^{-1}.
        \]
        Here $\prcov$ is the covariance operator of the Gaussian prior.
        For this example, we chose $\prcov = (\gamma \mathbf{K})^{-1}$ where $\gamma = 10^{-4}$ is a regularization parameter and $\mathbf{K}$ is the discretized Laplacian operator (with zero Dirichlet boundary conditions). In the numerical experiments in Section~\ref{sec: MC results} and \ref{sec: Cheby results}, we examine
        the effectiveness of our proposed estimators for computing $\normP{\postcov}$.

        \subsection{Monte Carlo Estimator Results}\label{sec: MC results}
            For each choice of test matrix described in previous subsection, we apply Algorithm~\ref{algorithm:: mc} and compute the error statistics as a function of the sample size $M$. For each fixed sample size $M$, we generated $500$ different realizations of $X_M$ using Algorithm~\ref{algorithm:: mc} and then found the average, $97.5$th quantile and $2.5$th quantile of the relative errors. 
            Note that the interval between the 2.5th and 97.5th quantiles is the same as the central 95th confidence interval for the error. 
            
            \paragraph{Synthetic test Matrices}
            In Figures~\ref{fig: CompareErrLowPTest} and \ref{fig: CompareErrHighPTest}, we display the mean and central 95th confidence interval for each of the four synthetic test matrices, using a value of $p = 5$ and $p = 120$ respectively.
            We call the shaded region within the 95th confidence interval the error envelope for $X_M$. First, we observe that the error statistics for the Monte Carlo estimator $X_M$ did not depend significantly on the eigenvalue distributions. Second, we observe that for $p=120$, the average relative error was lower compared to the average relative error for $p=5$ for all four eigenvalue distributions. 

            Furthermore, the error envelopes appear tighter suggesting smaller empirical variance with increasing $p$. This suggests the $(\varepsilon, \delta)$ bound for $X_M$ in Theorem~\ref{theorem: ProbConc} should decrease with increasing $p$.%
        \begin{figure}[!ht]
            \centering
                \begin{subfigure}
                    \centering
                    \includegraphics[width=.45\textwidth]{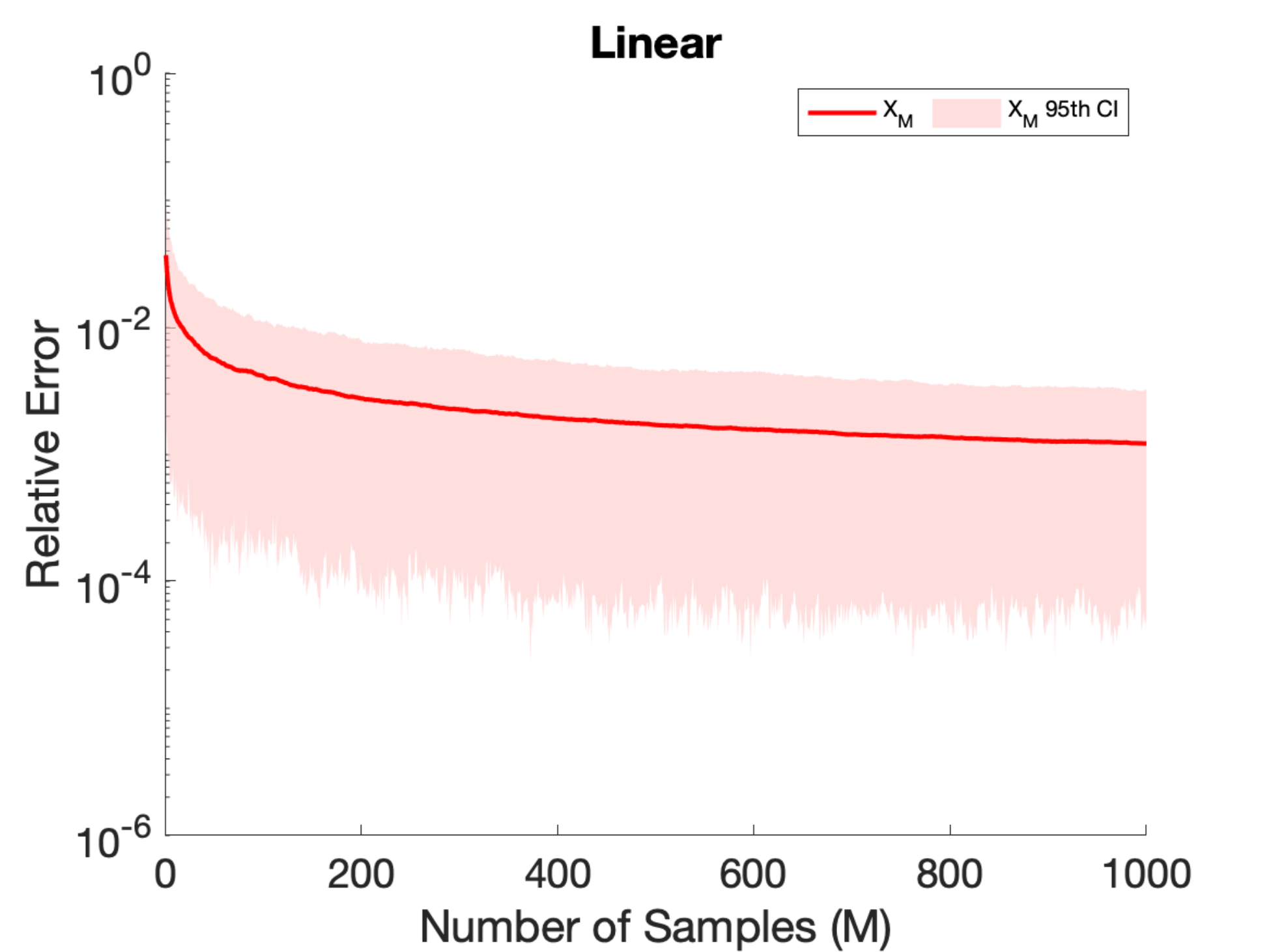}
                \end{subfigure}
                \begin{subfigure}
                    \centering
                    \includegraphics[width=.45\textwidth]{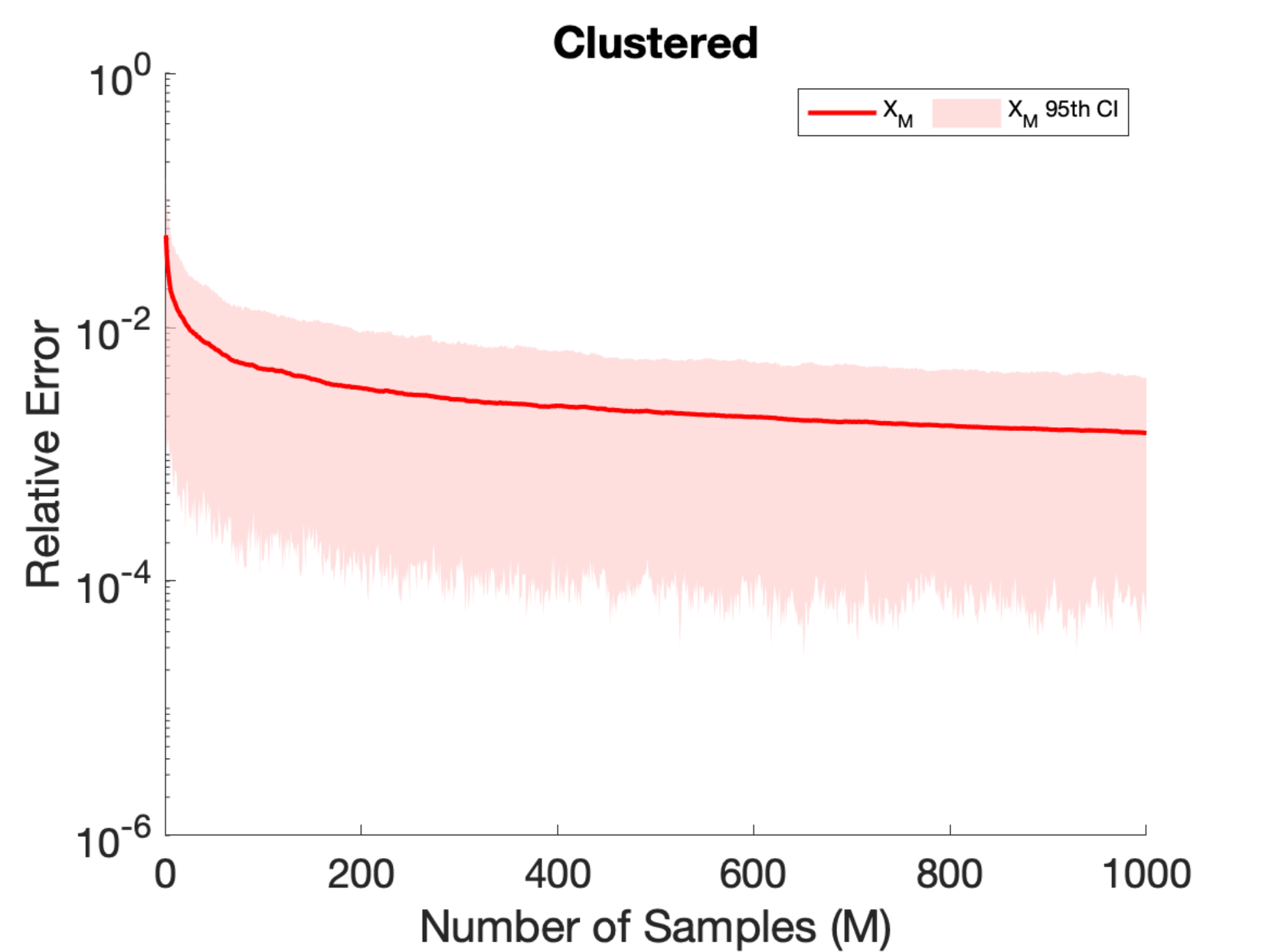}
                \end{subfigure}
                \begin{subfigure}
                    \centering
                    \includegraphics[width=.45\textwidth]{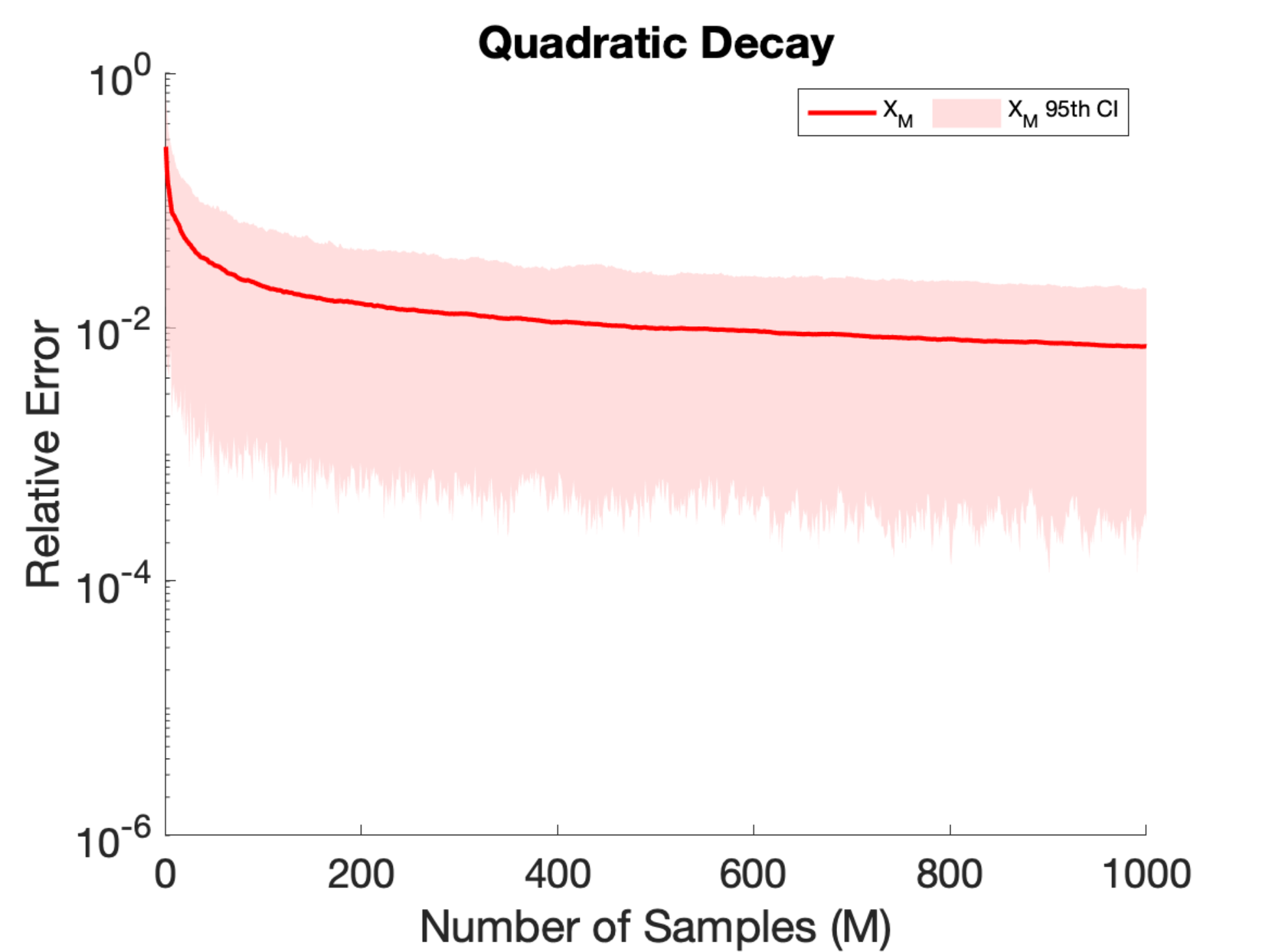}
                \end{subfigure}
                \begin{subfigure}
                    \centering                        
                    \includegraphics[width=.45\textwidth]{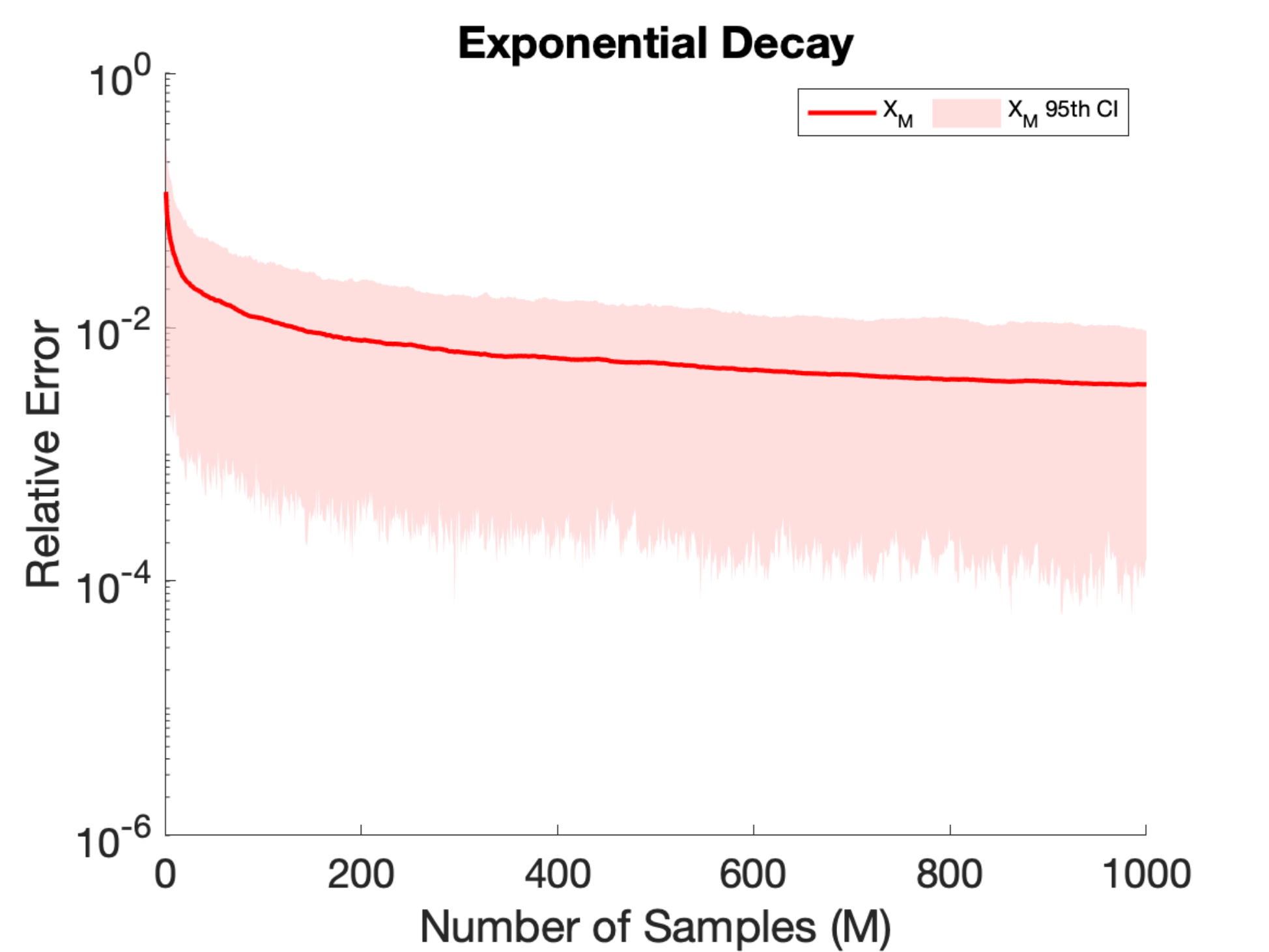}
                \end{subfigure}
            \caption{The relative error in the Monte Carlo Estimator $X_M$ for the 100 $\times$ 100 synthetic test matrices with $p = 5$. The error statistics were generated based on $500$ realizations each for a fixed sample size $M$.} %
            \label{fig: CompareErrLowPTest}
        \end{figure}
            
        \begin{figure}[!ht]
            \centering
                \begin{subfigure}
                    \centering
                    \includegraphics[width=.45\textwidth]{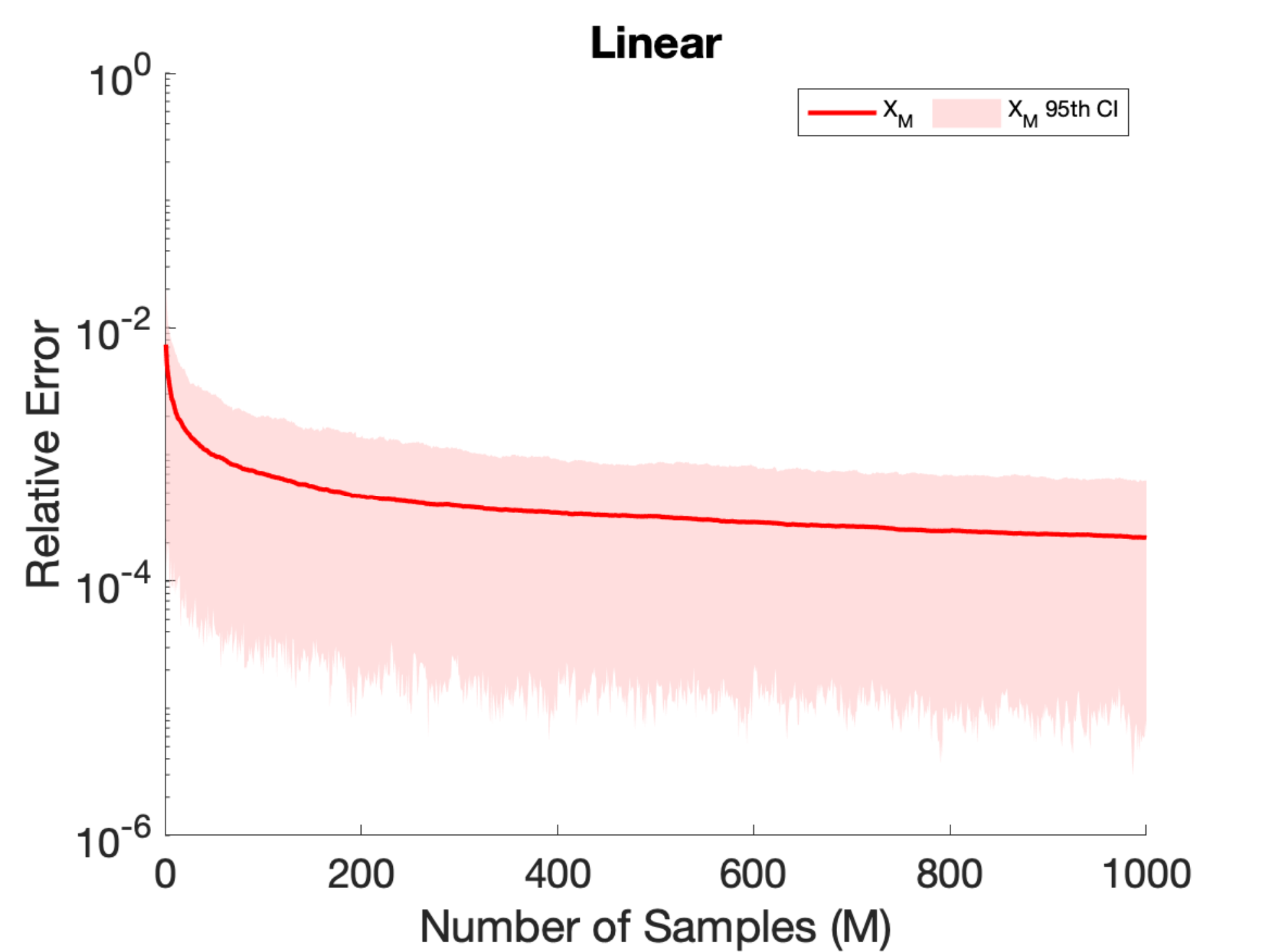}
                \end{subfigure}
                \begin{subfigure}                            
                    \centering
                    \includegraphics[width=.45\textwidth]{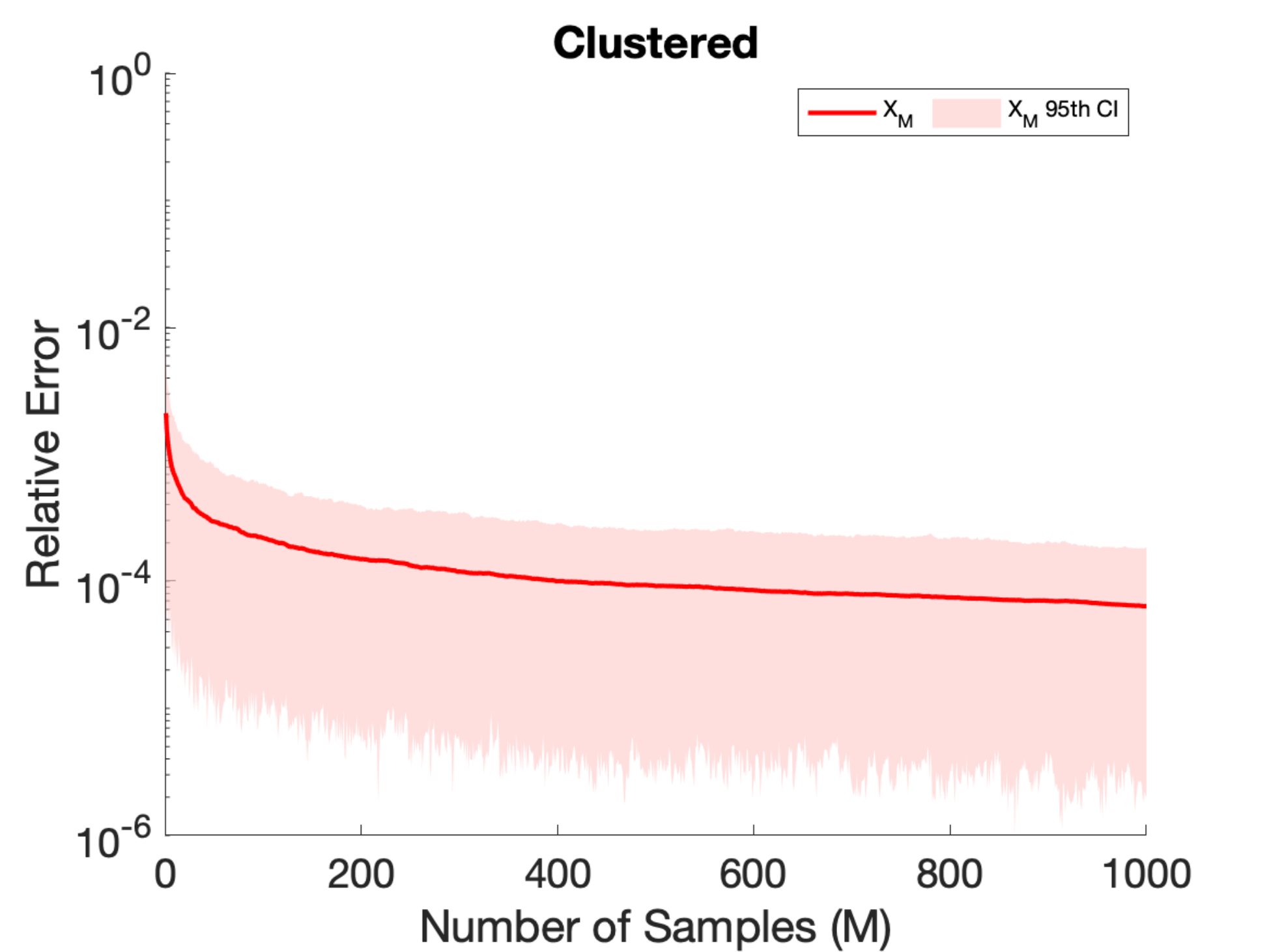}
                \end{subfigure}
                \begin{subfigure}
                    \centering
                    \includegraphics[width=.45\textwidth]{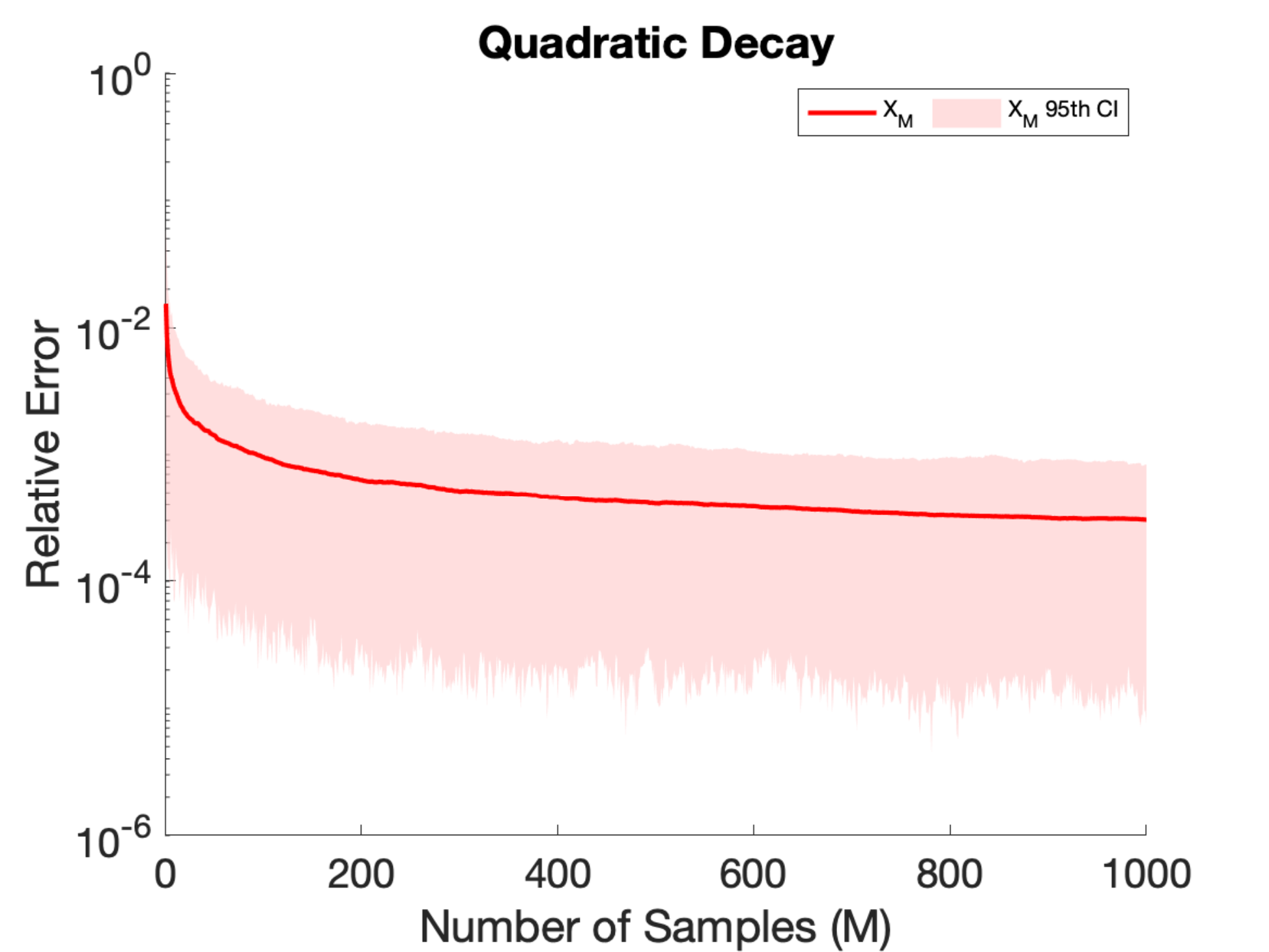}
                \end{subfigure}
                \begin{subfigure}
                    \centering
                    \includegraphics[width=.45\textwidth]{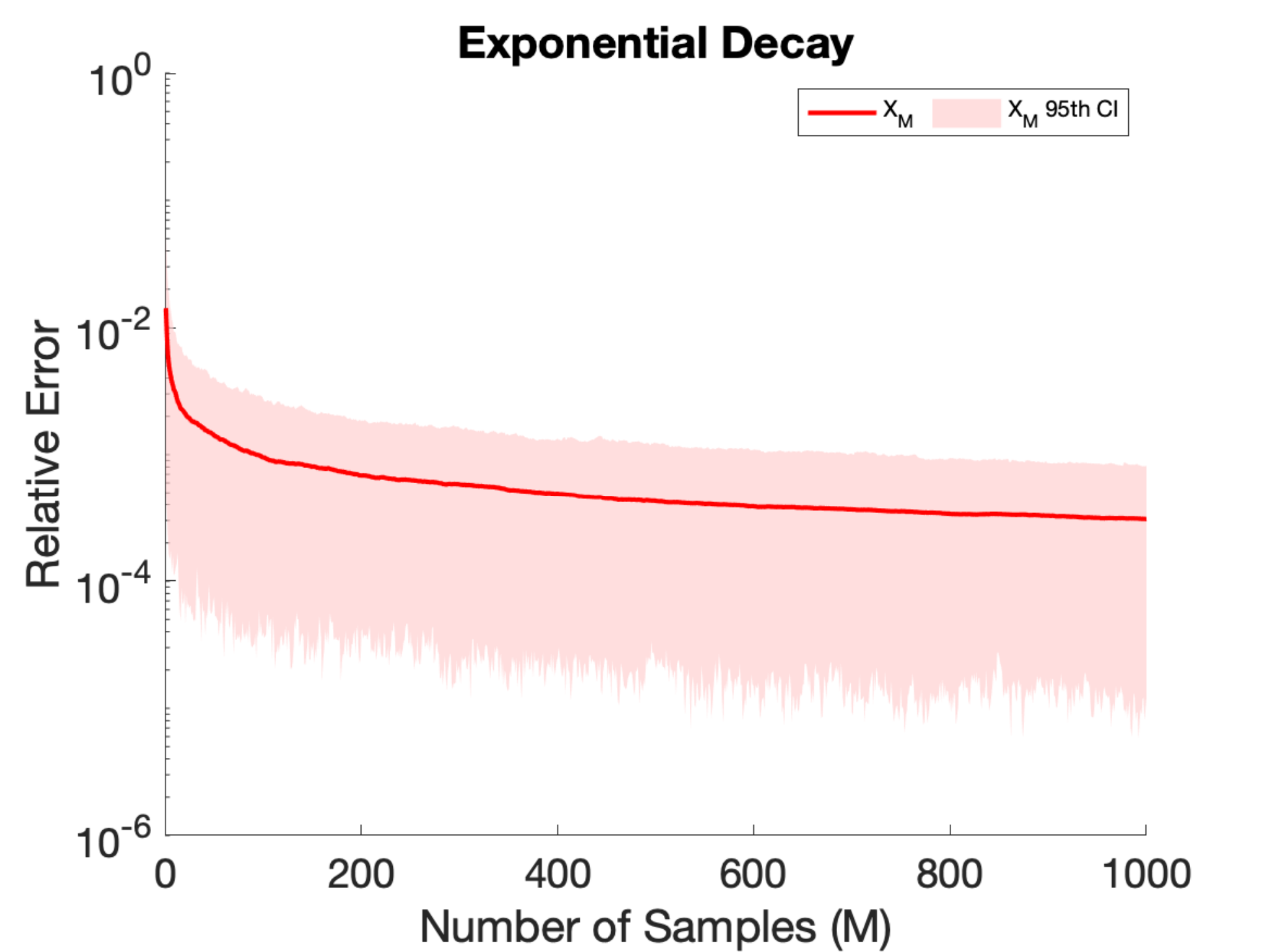}
                \end{subfigure}
            \caption{The relative error in $X_M$ for the 100 $\times$ 100 test matrices with $p = 120$. The error statistics were generated based on $500$ realizations each for a fixed sample size $M$.} %
            \label{fig: CompareErrHighPTest}
        \end{figure}
        
        \paragraph{Sparse Suite Matrices}
            We consider the two test matrices from the Suite Sparse matrix collection. 
            In Figure~\ref{fig: CompareErrLowPSparse} we display the error envelope when $p = 5$ and in Figure~\ref{fig: CompareErrHighPSparse} we plot the error envelope when $p = 80$. Once again the mean relative error decreased and the error envelope appears to tighten as $p$ increased. This further provides evidence that the relative error does not show strong dependency on the eigenvalue distribution. %
        
        \begin{figure}[!ht]
            \centering
                \begin{subfigure}
                    \centering
                    \includegraphics[width=.45\textwidth]{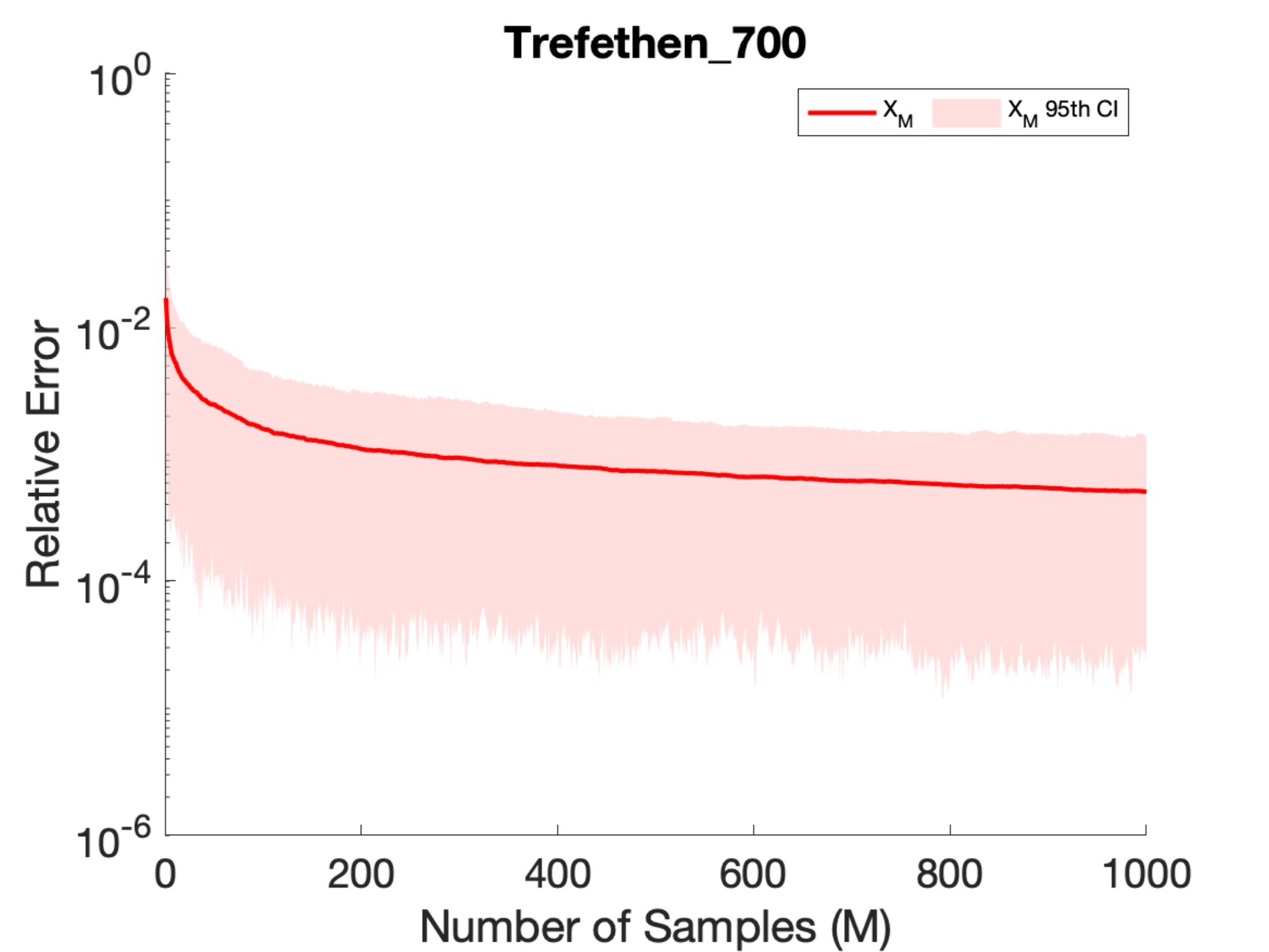}
                \end{subfigure}
                \begin{subfigure}
                    \centering
                    \includegraphics[width=.45\textwidth]{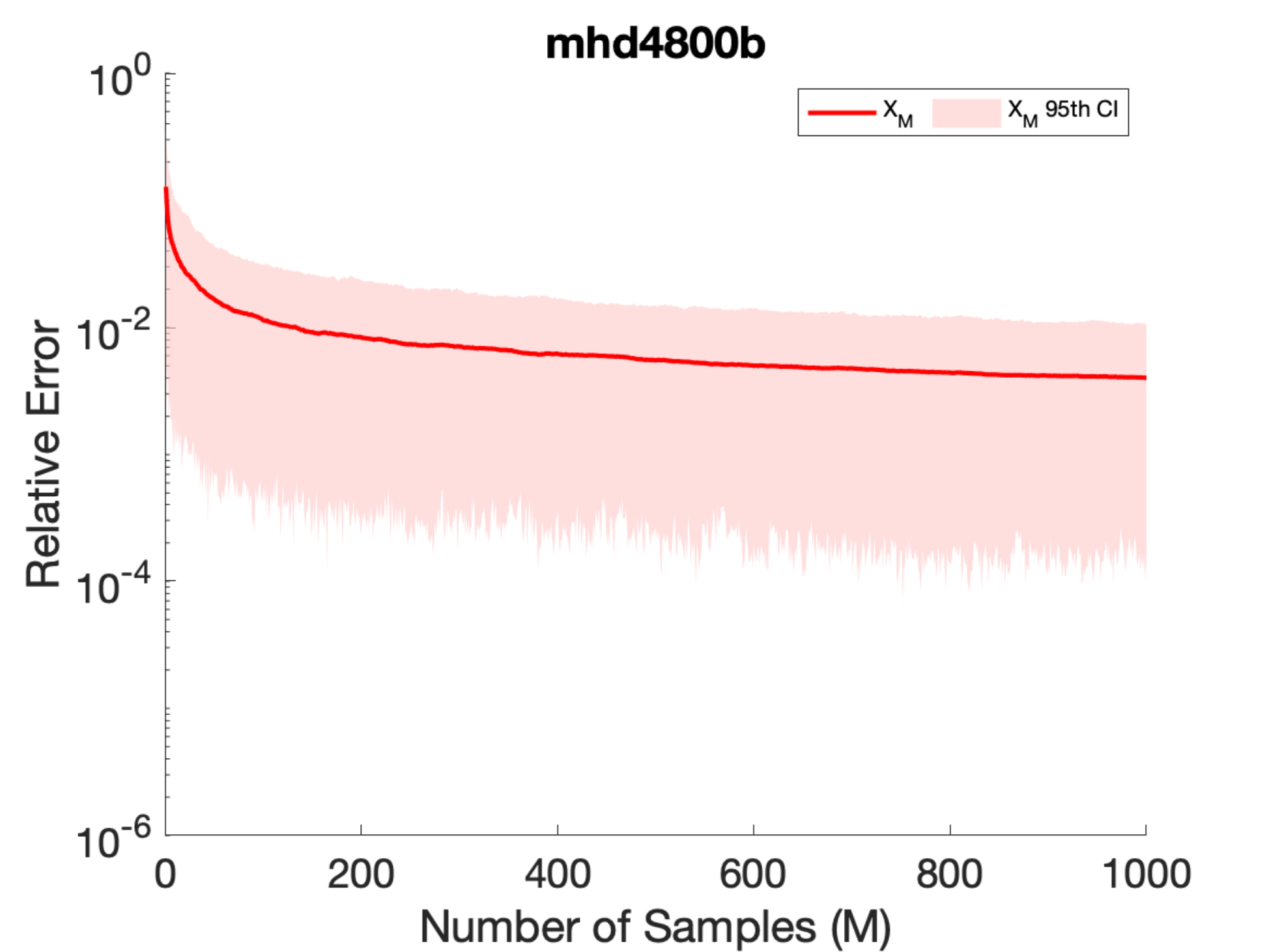}
                \end{subfigure}
            \caption{The relative error of $X_M$, with $p = 5$, for each of the matrices from the Suite Sparse matrix collection. The error statistics were generated based on $500$ realizations each for a fixed sample size $M$.} %
            \label{fig: CompareErrLowPSparse}
        \end{figure}
        \begin{figure}[!ht]
            \centering
                \centering
                \includegraphics[width=.45\textwidth]{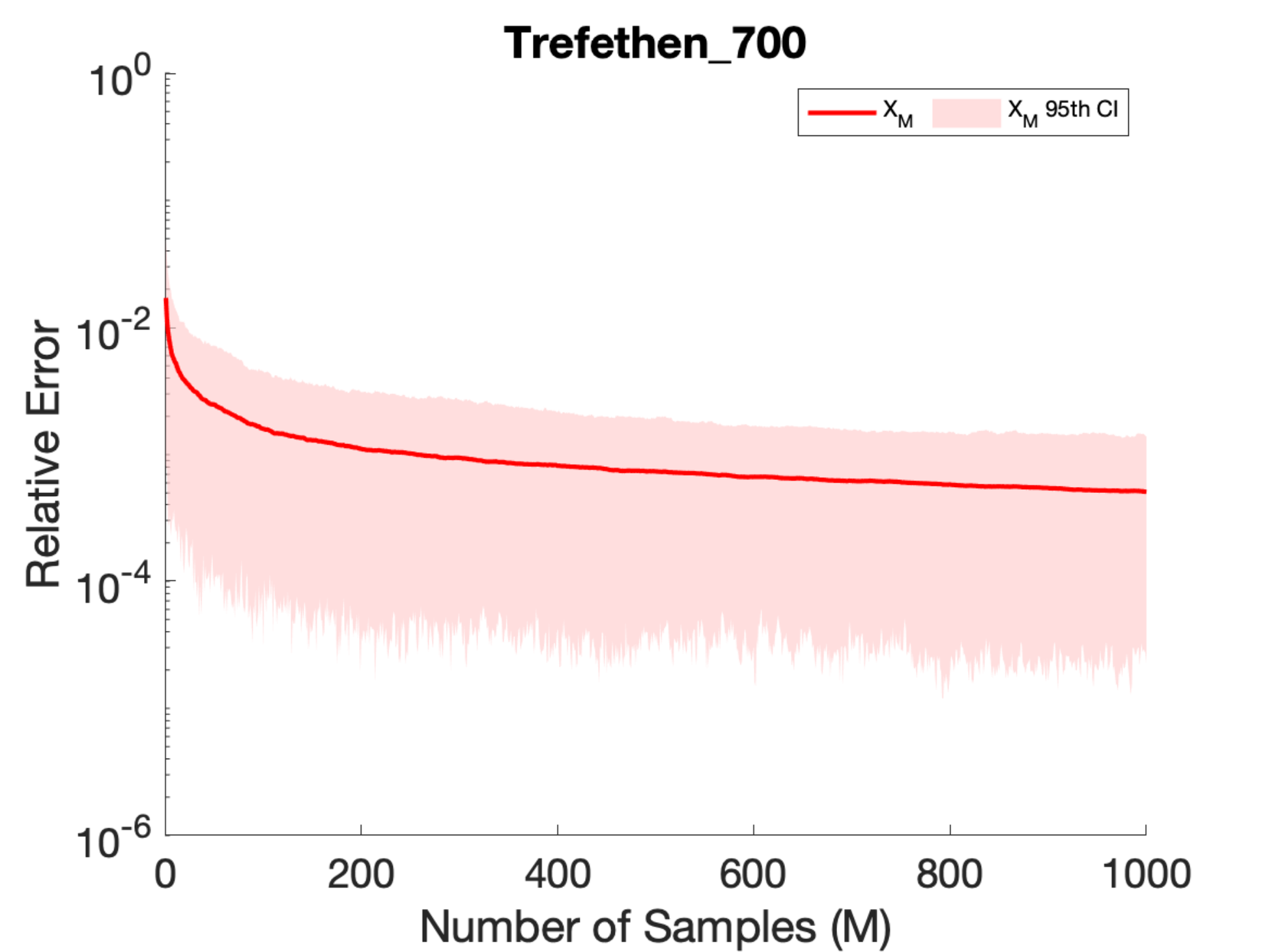}
                \includegraphics[width=.45\textwidth]{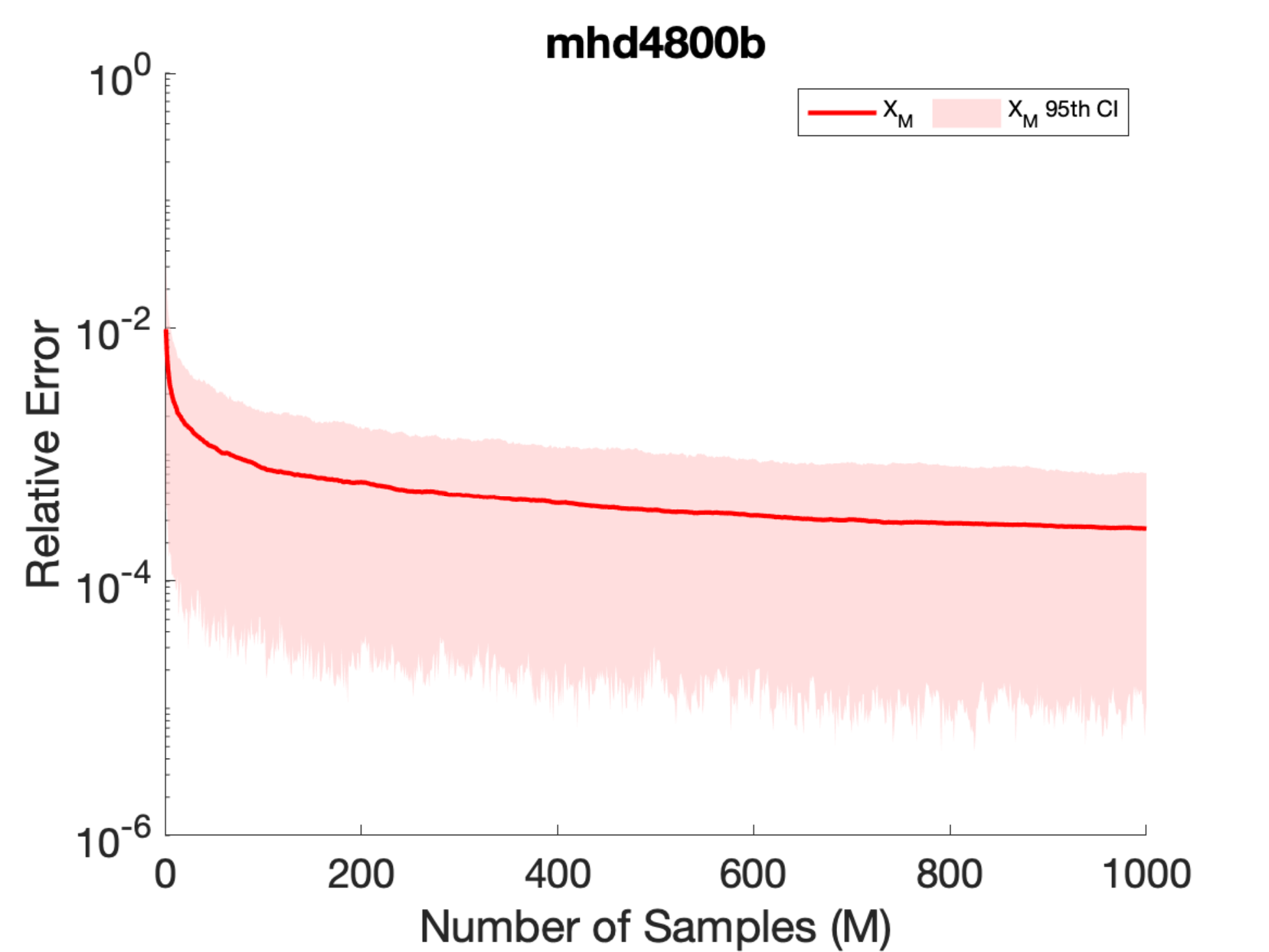}
            \caption{The relative error of $X_M$, with $p = 80$, for each of the matrices from the Suite Sparse matrix collection. The error statistics were generated based on $500$ realizations each for a fixed sample size $M$.} %
            \label{fig: CompareErrHighPSparse}
        \end{figure}
        
        \paragraph{Posterior Covariance Matrix}
            In Figure~\ref{fig: OED MC}, we display the relative error for the posterior covariance matrix generated using the setup in Section~\ref{ssec:oed}. The main conclusions from this plot are essentially the same as the other two sets of test matrices. %
            
       \begin{figure}[!ht]
            \centering
                \centering
                \includegraphics[width=.45\textwidth]{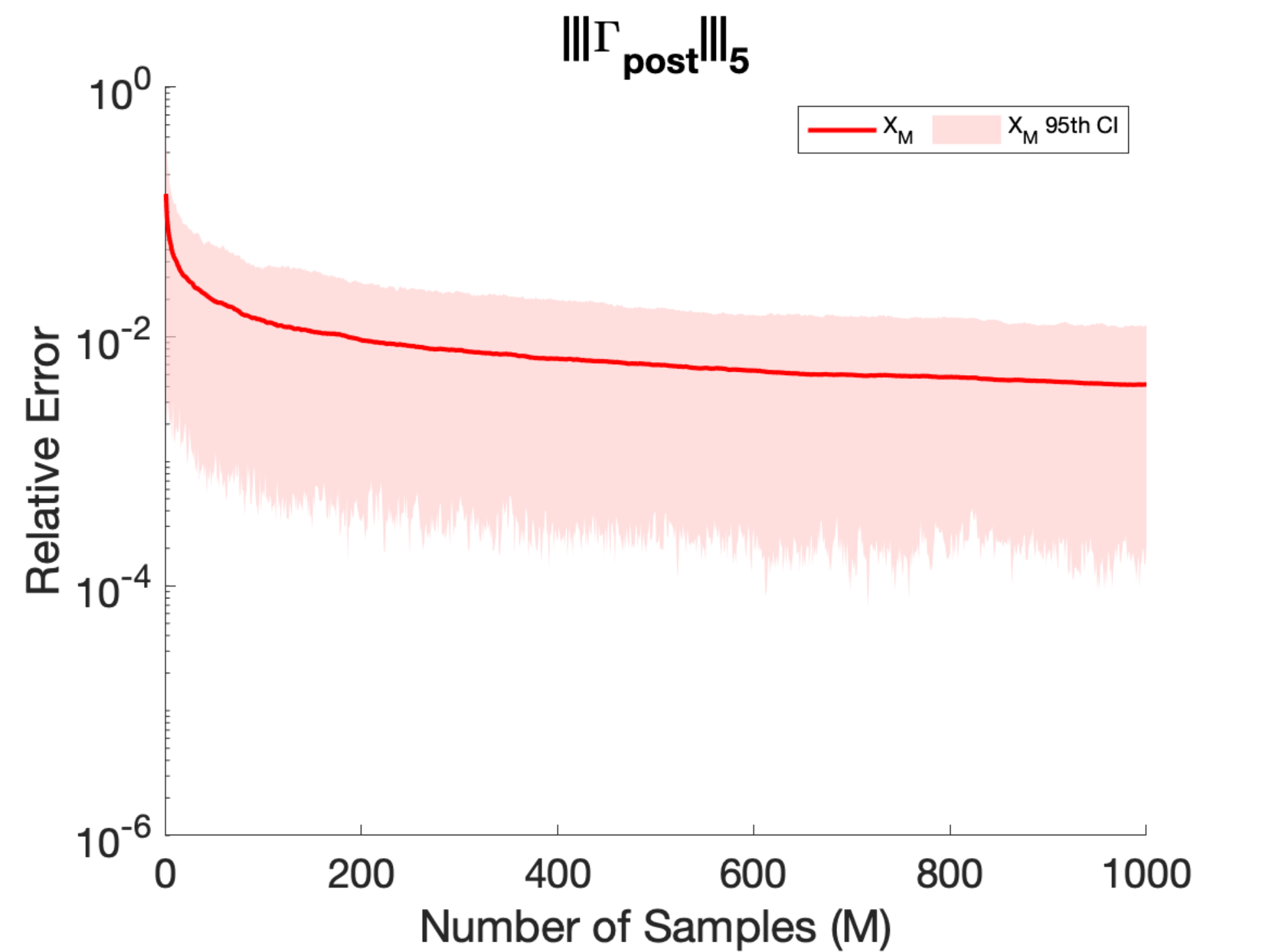}
                \includegraphics[width=.45\textwidth]{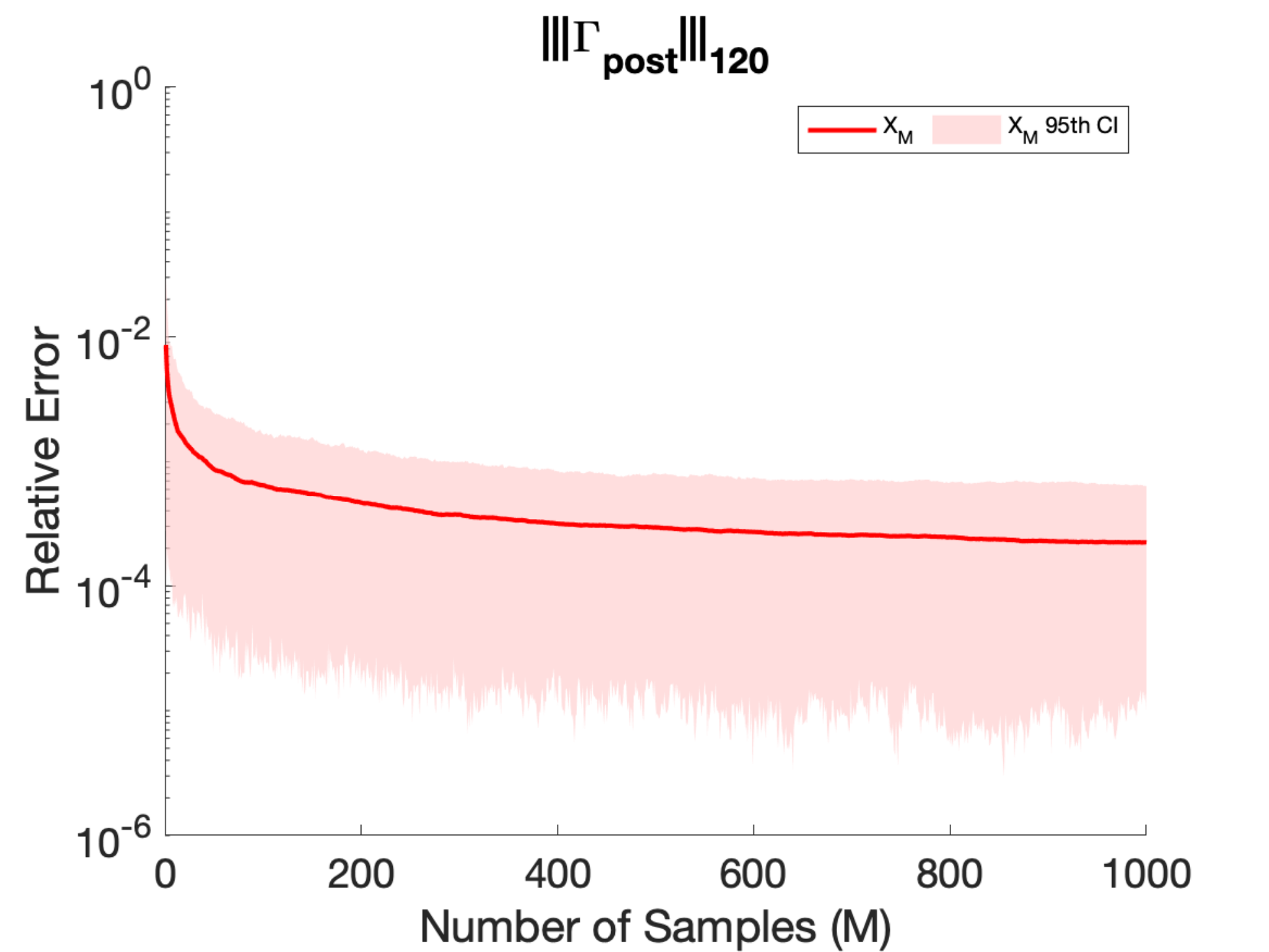}
            \caption{The relative error of $X_M$ for a 254 $\times$ 254 matrix $\Gamma_{post}$ with $p = 5$ and $p = 120$. Similar to the test matrices, we ran 500 different simulations for a fixed sample size $M$ and we computed the mean error and the 97.5th quantile and 2.5th quantile in the errors.}
            \label{fig: OED MC}
        \end{figure}
       
        \subsection{Chebyshev Monte Carlo Estimator}\label{sec: Cheby results}
            Recall that the bound on $N$ derived in Proposition~\ref{prop:ymnrelerr} was pessimistic. In this section we present numerical evidence that a relatively small $N$ is sufficient for accurately estimating $\normP{\A}$. %
            
            For the synthetic test matrices and the posterior covariance matrix, we chose $p =120$ and used $N = 5, 10, 20,  60$, whereas for the test matrices from the Suite Sparse collection, we used $p=80$ and $N = 5, 10, 20,  40$. For all the test matrices, we computed the error using Algorithm~\ref{algorithm:: Cheby}. Similar to the ``standard'' Monte Carlo method in Section~\ref{sec: MC results} we computed the average error by using $500$ realizations for a fixed sample size and value of $N$. %
            
            \paragraph{Synthetic Test Matrices}
            In Figure~\ref{fig: ChebyErrTest}, we display the mean relative error in $Y_{M,N}$ for $N = 5,10,20,30$ for each of the synthetic test matrices when $p = 120$. Notice that with $N = 20$ and $N = 30$ the average relative error has similar behavior as in Figures~\ref{fig: CompareErrHighPTest}. Next, we observe that the estimator $Y_{M,N}$ is accurate for all the test matrices here. However, if $N$ is small, i.e., $5-10$, then we see that increasing the number of samples does not decrease the average relative error due to the bias (i.e., error due to Chebyshev polynomial approximation). On the other hand, if $N$ is sufficiently large $\geq 10$, we see that increasing the sample size $M$ can reduce the average relative error.

            \paragraph{Suite Sparse Matrices}
            In Figure~\ref{fig: ChebySparse} we display the mean relative error for using Chebyshev approximation to accelerate the computation of $\normP{\A}$ for the Trefethen\_700 and mhd4800b matrices when $p = 80$. Here we used values of $N = 5, 10, 20, 30$ and notice similar trends as in the numerical experiments using synthetic matrices. For example, once again we observe that if $N$ is too low then increasing the sample size will not reduce the error in the Chebyshev approximation. Also we find that for both test matrices, $N = 20$ was sufficient for accurately approximating the Schatten $p$-norm.
            
            \begin{figure}[!ht]
            \centering
                \begin{subfigure}
                    \centering
                    \includegraphics[width=.45\textwidth]{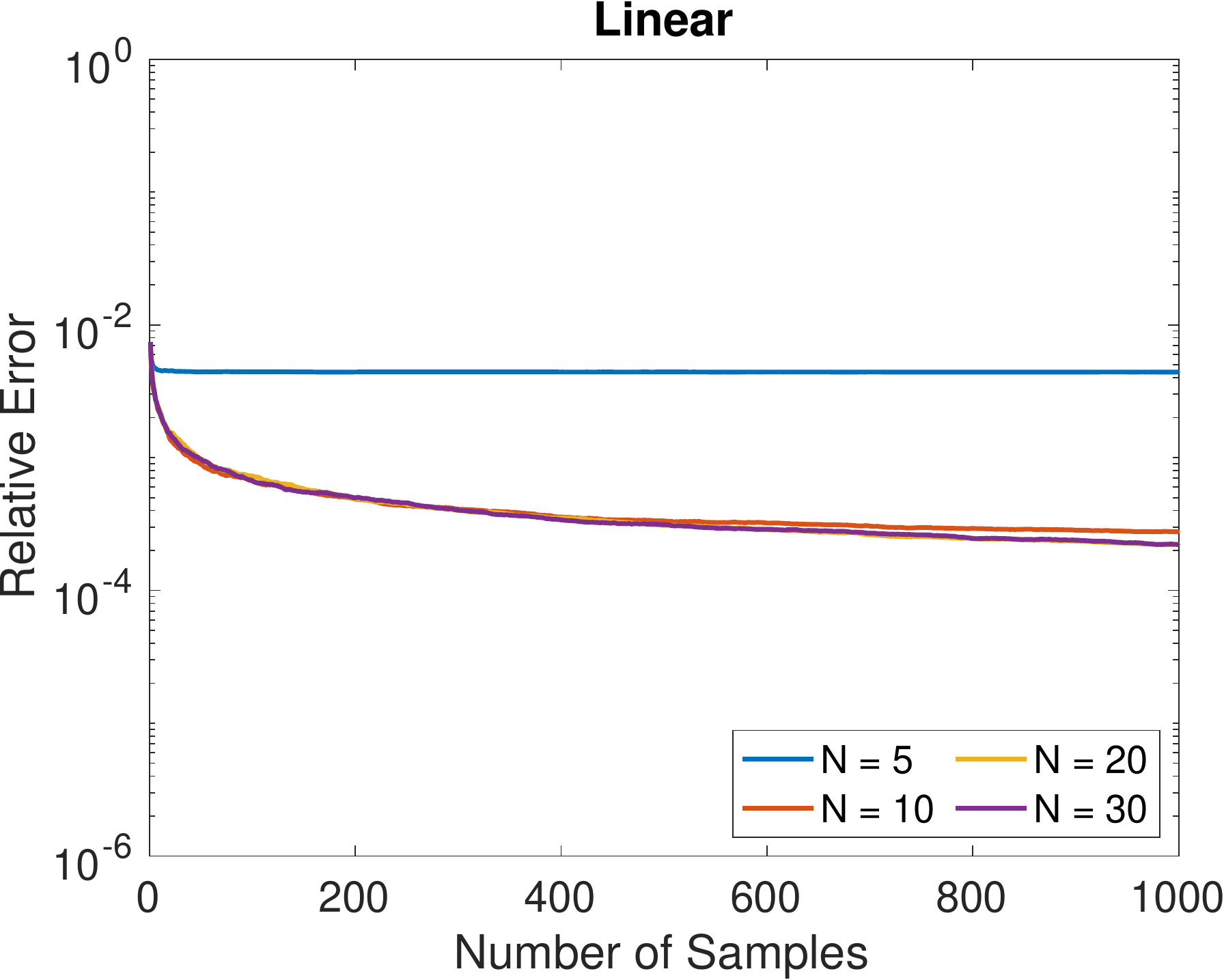}
                \end{subfigure}
                \begin{subfigure}
                    \centering
                    \includegraphics[width=.45\textwidth]{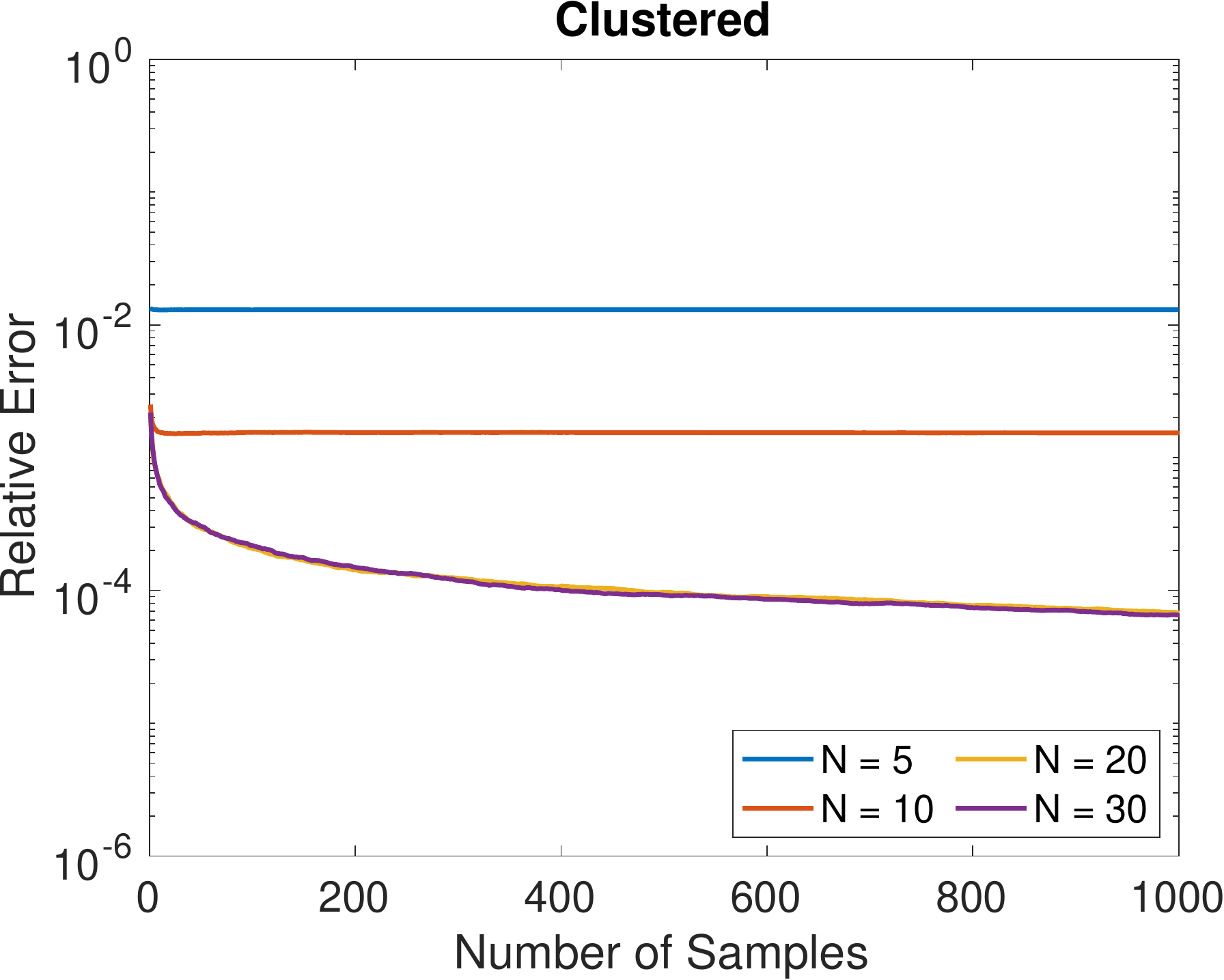}
                \end{subfigure}
                \begin{subfigure}
                    \centering
                    \includegraphics[width=.45\textwidth]{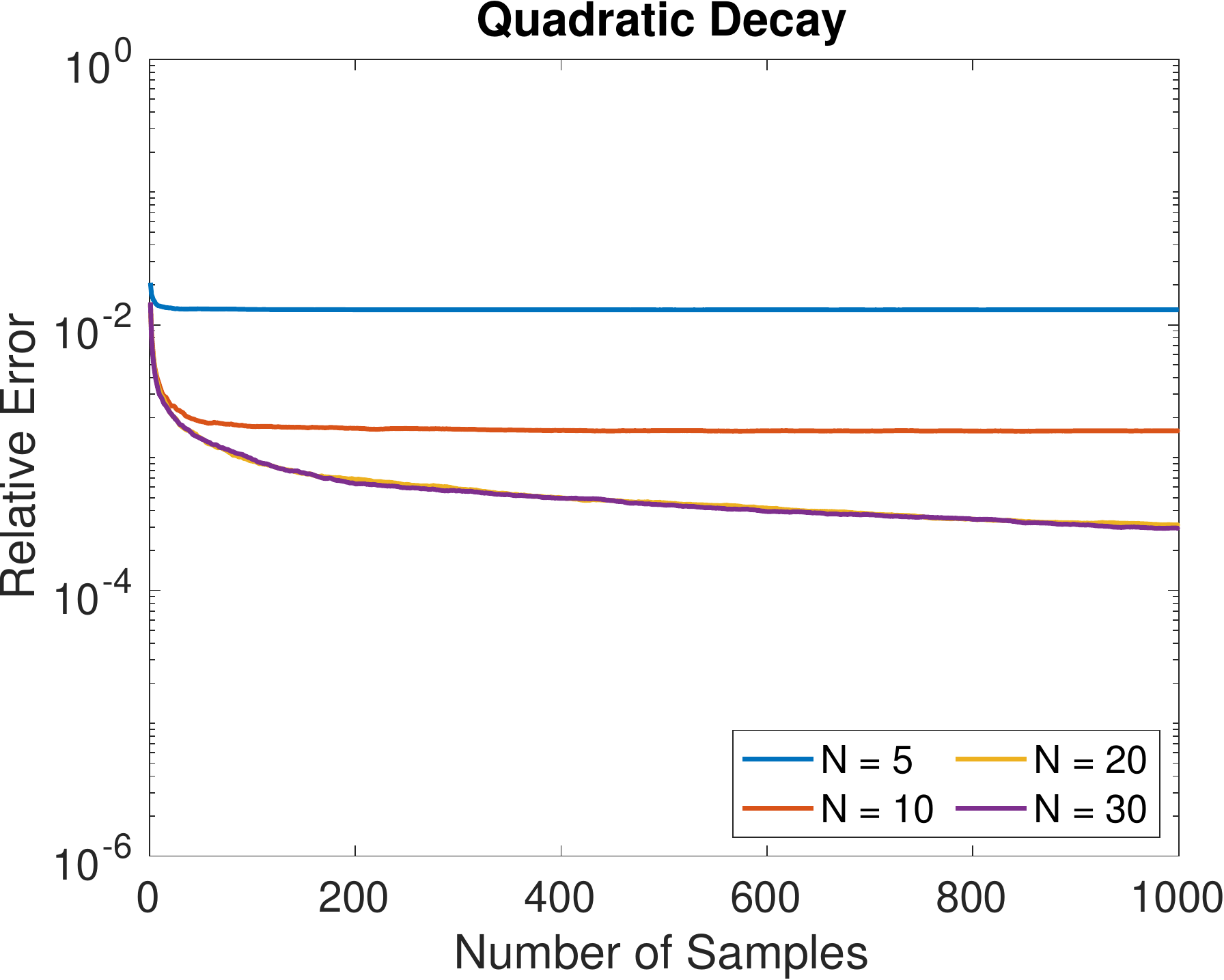}
                \end{subfigure}
                \begin{subfigure}
                    \centering
                    \includegraphics[width=.45\textwidth]{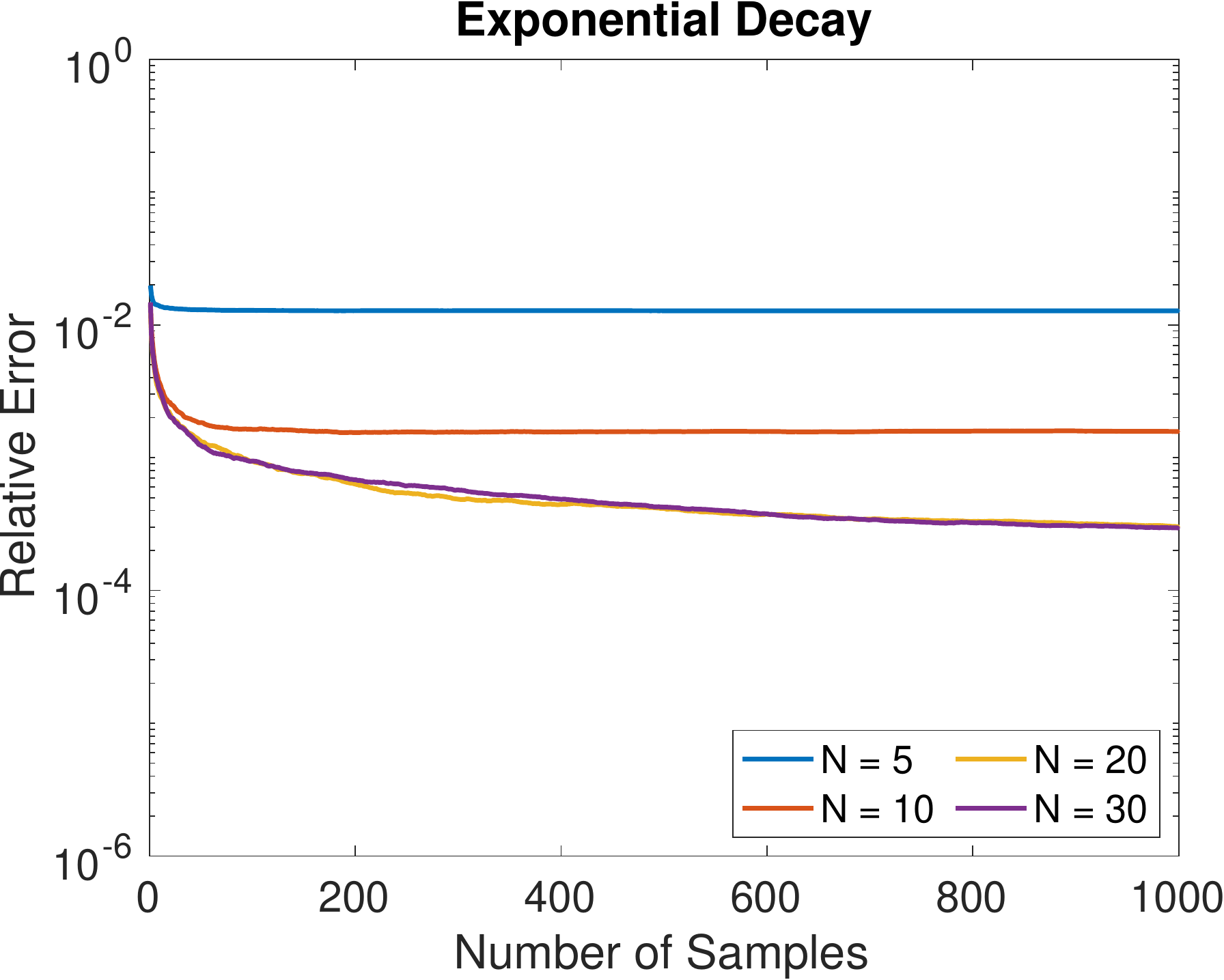}
                \end{subfigure}
                \caption{The average relative error of $Y_{M,N}$ for each of the 100 $\times$ 100 test matrices with $p = 120$. We used $500$ different realizations for a fixed sample size $M$ and degree $N$.}
                \label{fig: ChebyErrTest}
            \end{figure}
            \begin{figure}[!ht]
                \centering
                    \begin{subfigure}
                        \centering
                        \includegraphics[width=.45\textwidth]{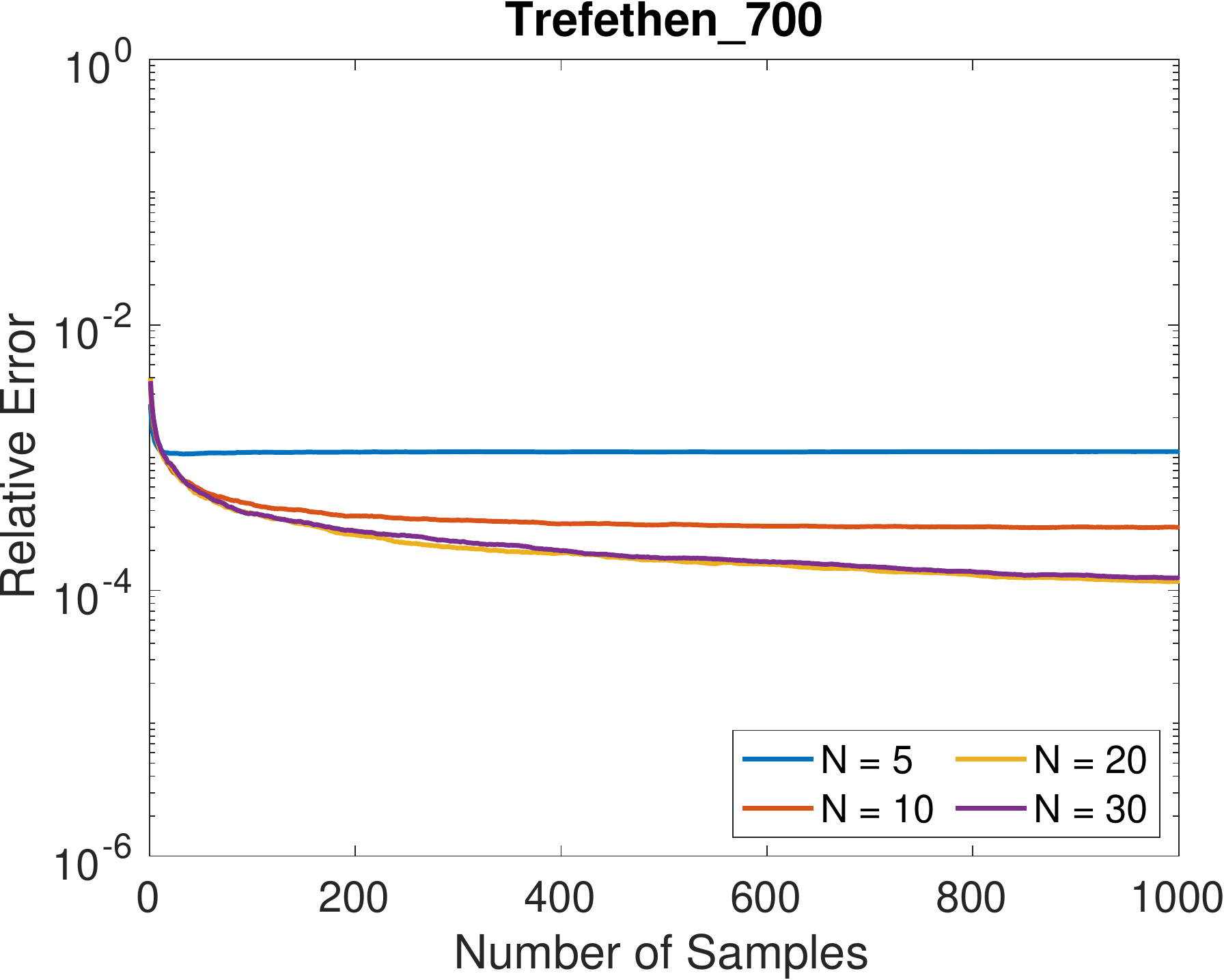}
                    \end{subfigure}
                    \begin{subfigure}
                        \centering
                        \includegraphics[width=.45\textwidth]{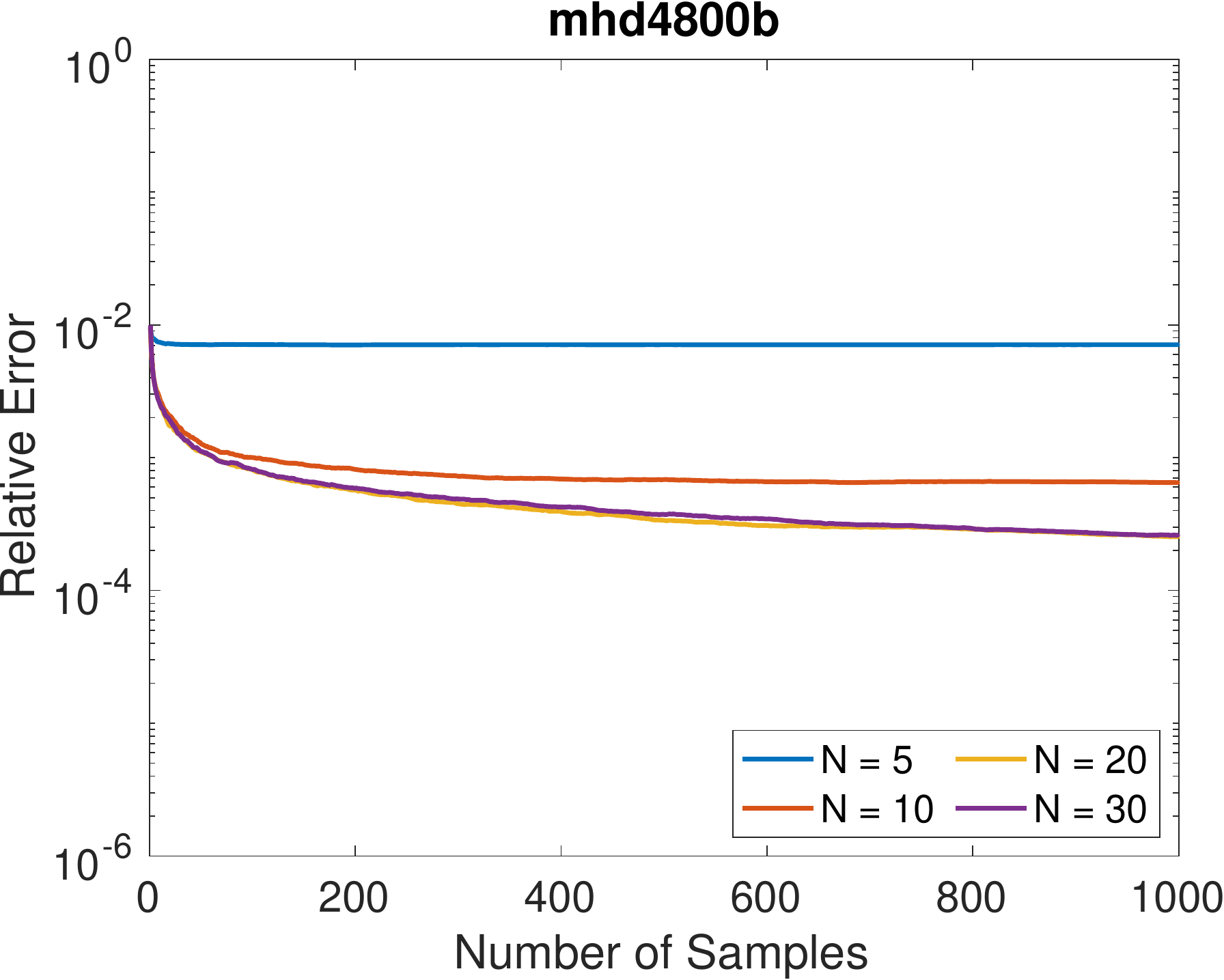}
                    \end{subfigure}
                \caption{The average relative error of $Y_{M,N}$ for each of the Suite Sparse matrices with $p = 80$. We used 500 different realizations for a fixed sample size $M$ and degree $N$ .}
                \label{fig: ChebySparse}
            \end{figure}
            
            \paragraph{Posterior Covariance Matrix}
            In Figure~\ref{fig: ChebyPost} we display the mean relative error in $Y_{M,N}$ for the Posterior Covariance Matrix from our OED example problem with $p=120$. Here we used $N = 5,10,20,30$ and, similar to the Test matrices, we find that $N = 20$ was sufficient to approximate $\normP{\Gamma_{post}}$, which is a speedup of a factor of $3$ in terms on number of matrix-vector products.  
            \begin{figure}[!ht]
                \centering
                \includegraphics[width=.45\textwidth]{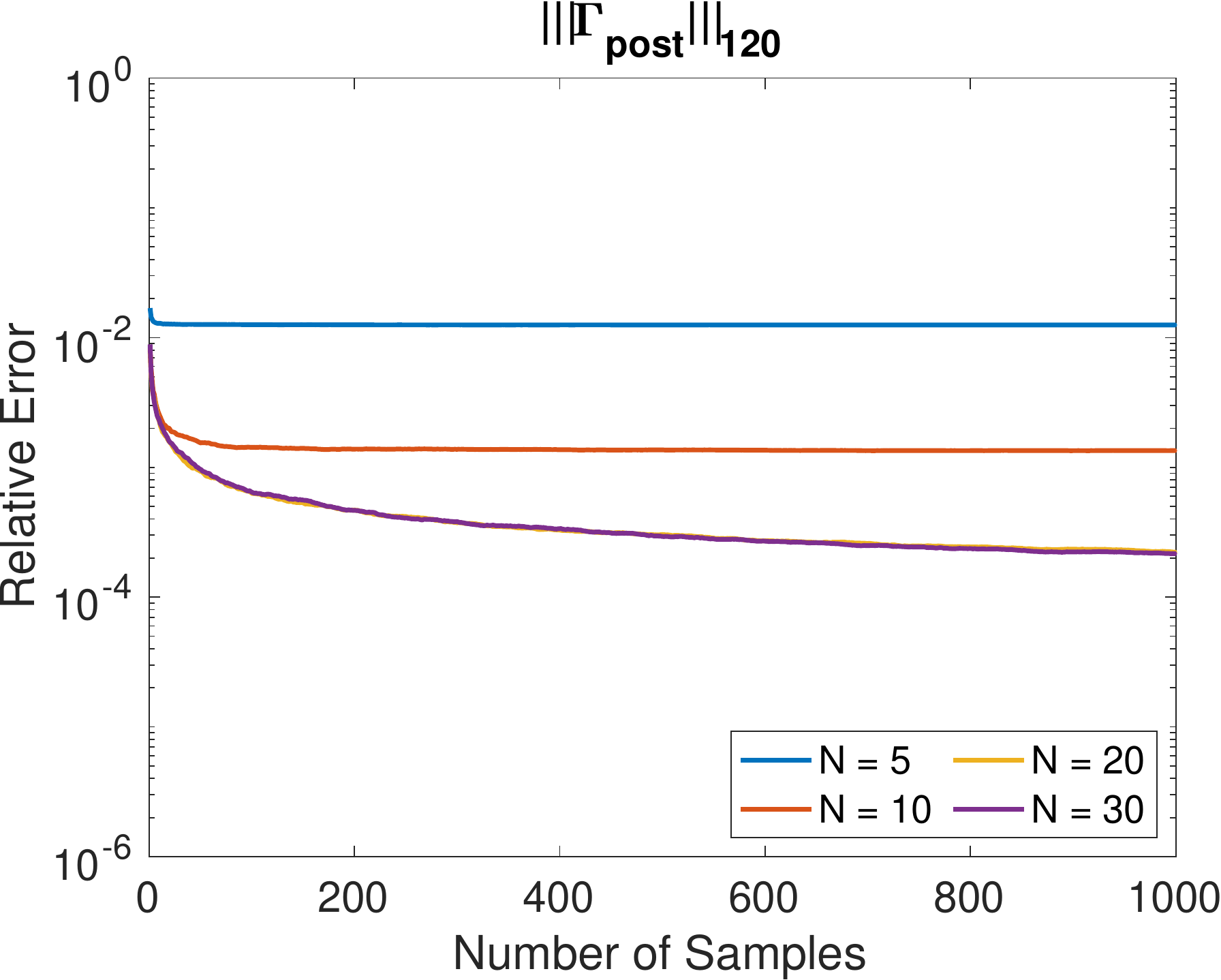}
                \caption{The average relative error of $Y_{M,N}$ for a 254 $\times$ 254 matrix $\Gamma_{post}$ with $p = 120$. We used 500 different realizations for a fixed sample size $M$ and degree $N$.}
                \label{fig: ChebyPost}
            \end{figure}

        We return to the question of the degree of Chebyshev polynomials. Numerical evidence suggested that $N = 20$ was sufficient for $p = 120$ and $N=10$ is sufficient for $p=80$ even with condition numbers as large as $8 \times 10^{13}$. This suggests that the bound in Proposition~\ref{prop:ymnrelerr} is pessimistic and suggests potential room for improvement.%

        Another point worth mentioning here is the trade-off between the degree of the polynomial and the number of samples used. If the degree of the polynomial is small, then even with a large number of samples, the error may be dominated by the bias in the Chebyshev polynomial approximation. On the other hand, if the degree of the polynomial is sufficiently high, then the error may be determined by the sample size. Suppose we are given a fixed computational budget for a certain number of matrix-vector products. For a given relative error, and a certain user defined probability, one can use Theorem~\ref{theorem: YMN eps delta} to give insight into apportioning the computational budget between the degree of the polynomial and the number of Monte Carlo samples. %

    \section{Conclusion}
    Computation of the Schatten $p$-norm is frequently used in linear algebra and analysis, however, computing it using a straightforward application of the definition can be  computationally difficult for large matrices. We proposed two different estimators and presented probabilistic analysis of their convergence and accuracy. The numerical results show that our estimators are efficient and accurate. They also serve to illustrate the main theoretical analysis developed in this paper, but show room for improvement. Specifically, we would like to show in Section~\ref{sec: mc}, the number of samples for an $(\varepsilon,\delta)$ estimator for $\normP{\A}$ decreases with $p$. Similarly, we would like to show that a small degree $N$ is sufficient for accurately estimating $\normP{\A}$ using $Y_{M,N}$. Other possible future directions involve using a stochastic Lanczos quadrature approach as in~\cite{ubaru2017fast}, which has the advantage that it does not require estimates of the extreme points of the spectrum and promises to be more accurate compared to the Chebyshev polynomial approximation. Another possible approach is using a rational approximation to $x^p$~\cite{ubaru2017fast}; while a relatively small degree rational function is sufficient, computing a rational matrix function can be computationally expensive.

    \section{Acknowledgements}
    We are grateful to Eric Hallman for his suggestion of using the symmetry of $\A$ to cut the operations cost in half in Algorithm \ref{algorithm:: mc}. The authors would like to acknowledge support from the National Science Foundation through the grant `` RTG: Randomized Numerical Analysis'' DMS - 1745654. %

    \bibliography{Schatten-pRef}
    \bibliographystyle{abbrv}
\end{document}